\newtheorem{theorem}{Theorem}
\newtheorem{lemma}{Lemma}
\newtheorem{proposition}{Proposition}
\newtheorem{remark}{Remark}
\newcommand{\eps}{\varepsilon}
\title[A discontinuous Poisson--Boltzmann equation]
{A discontinuous Poisson--Boltzmann 
equation with interfacial transfer: \\
homogenisation and residual error estimate}
\author[Klemens Fellner and Victor A. Kovtunenko]{Klemens Fellner and Victor A. Kovtunenko}
\address{Klemens Fellner \hfill\break
Institute of Mathematics and Scientific Computing, University of Graz, 
NAWI Graz, Heinrichstra{\ss}e 36, 8010 Graz, Austria}
\email{klemens.fellner@uni-graz.at}
\address{Victor A. Kovtunenko\hfill\break
Institute of Mathematics and Scientific Computing, University of Graz, 
NAWI Graz, Heinrichstra{\ss}e 36, 8010 Graz, Austria and
Lavrent'ev Institute of Hydrodynamics, 630090 Novo\-sibirsk, Russia}
\email{victor.kovtunenko@uni-graz.at}
\begin{document}

\keywords{steady state Poisson--Nernst--Planck system, nonlinear Poisson
equation, transmission condition, interfacial jump, oscillating coefficients, 
homogenisation, error corrector, Boltzmann statistics, electro-kinetic, photo-voltaic.} 

\subjclass{35B27, 35J60, 78A57, 82B24.}

\begin{abstract}
A nonlinear Poisson--Boltzmann equation with transmission boundary conditions 
at the interface between two materials is investigated. 
The model describes the electrostatic potential generated by a vector of ion concentrations 
in a periodic multiphase medium with dilute solid particles. 

The key issue is that the interfacial transfer allows jumps and 
thus discontinuous solutions of the problem. 
Based on variational techniques, we derive the homogenisation of the discontinuous problem 
subject to inhomogeneous transmission interface conditions. Moreover, we establish a rigorous residual error 
estimate up to the first order correction. 
\end{abstract}

\maketitle

\section{Introduction}\label{sec1}

In this paper, we consider the steady state problem of a nonlinear Poisson--Nernst--Planck (PNP) system, which describes multiple concentrations of charged particles (e.g. ions) subject to a self-consistent electrostatic potential calculated 
from Poisson's equation. In particular, we shall investigate the PNP model on a multiphase medium. The prototypical 
multiphase medium in mind consists of an electrolyte medium, which surrounds disjoint solid particles.  
Such models have numerous applications describing electro-kinetic phenomena in bio-molecular or electro-chemical models, photo-voltaic systems and semiconductors, 
see e.g. \cite{AMP/10,ACDDJLMTV/04,BFMW/13,Hum/00,LC/06,RMK/12,SB} 
and references therein. 
Our specific interests are motivated by models of Li-Ion batteries, see e.g. \cite{Pic/11}. 

In order to be able to deal with the nonlinearity of the model, we shall work within an analytic framework, 
where the PNP system can be equivalently transformed into 
a scalar semi-linear Poisson--Boltzmann (PB) equation. 
This is possible, when  reaction terms in the charged particle fluxes are omitted and the equations for the concentrations decouple 
%from the equation for the electrostatic potential 
since the charged particle concentrations are explicitly determined 
by the corresponding Boltzmann statistics. 
For references applying linearisation of the PNP equations near the Boltzmann distribution see e.g. \cite{AMP/10,LC/06}.

The major difficulty addressed in this manuscript is 
the imposed inhomogeneous intermedia transmission 
boundary condition for the electrostatic field, which complements the PB equation (see \eqref{2.8} below). 
Thus, the key feature of the presented model is the electric charge 
transport phenomena over the interfaces at the boundaries of the solid particles. 
The interfacial transfer shall be described by the Gouy--Chapman-Stern model 
for electric double layers (EDLs) \cite{Pic/11}.
This model proposes a jump of the electrostatic field across the interface (a voltage drop) 
as well as a current prescribed at the interior boundary of the solid particles. 

In the following, we will derive a discontinuous formulation of the PB equation 
(valid both on the volume occupied by the solid particles and on the surrounding porous space) 
with inhomogeneous transmission conditions at the interfaces between particles and porous space. 

A first aim of this paper is to establish a proper variational setting 
of the transmission problem, while a second part deals with its rigorous homogenisation.  
In respect to the later, we emphasise that 
the averaged effective coefficients of the limit problem represent the
macroscopic behaviour of the EDL, which is of primary practical importance. 

For reference concerning the classic homogenisation theories, we refer to 
\cite{Arg/04,BP/89,BLP/11,OSY/92,SP/80,ZKO/94}. 
The applied methods range from two-scale convergence (see e.g. \cite{All/92}) over Gamma-convergence (see e.g. \cite{DM/06}) to unfolding (see \cite{CDDGZ/12}) and others. 
While formal methods of averaging are widely used in the literature, 
their verification in terms of residual error estimates 
is a hard task. 

From the point of view of homogenisation, the principal difficulty of 
interfacial transmission problems concerns the non-standard boundary conditions with jumps: 
On the one hand, related jump conditions are inherent for cracks. 
For models and methods used in crack problems, we refer to 
\cite{FHLRS/09,HKK/09,KK/00,SP/80} and references therein. 
From a geometric viewpoint, cracks are open manifolds in the reference domain. 
Hence, classic Poincare--Friedrichs--Korn inequalities are valid in such situations. 
In contrast to cracks, the interfaces here are assumed to be closed manifolds 
disconnecting the reference domain. 
This difference requires discontinuous versions of Poincare--Friedrichs--Korn 
inequalities, which are then applied for semi-norm estimates. 

On the other hand, the transmission boundary conditions are of Robin type. 
The homogenisation results known for linear problems with Robin (also called Fourier) conditions are crucially sensitive to the asymptotic rates of the involved homogenisation parameters. 
This issue concerns the coefficients in the boundary condition (cf. Lemma \ref{lem3.1} below) and the volume fraction 
of solid particles in periodic cells (cf. Lemma \ref{lem3.2} below), see e.g. \cite{ADH/96,BPC/01,OS/96}. 

The literature on homogenisation of transmission problems is very scarce, 
see e.g. \cite{Hum/00,Orl/12}. 
The technical challenge of this manuscript 
is the combination of nonlinearity, discontinuity and Robin type transmission 
conditions.
\smallskip

In the present work, we homogenise the discontinuous nonlinear PB equation with 
inhomogeneous interfacial transfer conditions and derive the averaged limit problem.
A further major result is the rigorous derivation of the residual error up to  
the first order correction. 

For these purposes, we develop a variational technique based on orthogonal Helmholtz 
decomposition following the lines of \cite{OSY/92,ZKO/94}. 
In a periodic cell, we decompose oscillating coefficients (describing the 
electric permittivity) by using the nontrivial kernel in the space of vector valued 
periodic functions, which is represented by sums of constant and divergence free (and thus, skew symmetric) vector fields
(cf. Lemma \ref{lem3.3}).  
Employing solutions of appropriately defined discontinuous cell problems, 
we obtain a regular decomposition of the homogenisation problem (see Theorem \ref{theo3.1}). 

A second result establishes the critical rates of the asymptotic behaviour 
with respect to a homogenisation parameter $\eps\searrow0^+$ 
for coefficients in the inhomogeneous transmission condition: 
We find on the one side that the critical rate for the coefficient 
by interfacial jumps is $\frac{1}{\eps}$. 
This factor occurs in the discontinuous Poincare inequality 
(for the norm squared, cf. \eqref{2.19} below)
and is thus relevant for a coercivity estimate, which in return 
contributes to the solvability of the discontinuous problem 
and the subsequent estimate of the homogenisation error. 

On the other side, the critical rate for the flux prescribed 
at the interior boundary of solid particles is $\eps$. 
At this rate, the interior boundary flux induces an additional potential, which  
distributes over the macroscopic 
domain in the homogenisation limit $\eps\searrow0^+$. 
If the asymptotic rate is lower than the critical one, then this flux vanishes in the limit. 
Otherwise, if the asymptotic rate is bigger, then the flux term diverges. 

From the above description we summarise the key points of this paper as follows: 
\begin{itemize}
\item 
the study of inhomogeneous interfacial transfer conditions describing EDL; 
\item 
the combination of nonlinear terms, jumps and Robin conditions;
\item 
a variational framework of the transmission problem; 
\item 
the performing of the homogenisation procedure with rigorous error estimates; 
\item 
the identification of the critical asymptotic rates of the boundary coefficients.
\end{itemize}
\medskip

\noindent\underline{Outline}: In the Sections \ref{sec2.1}, \ref{sec2.2} and \ref{sec2.3}, we first present the problem geometry, the physical and the 
mathematical model. Section~\ref{sec2.3} establishes moreover the equivalence of the steady-state of the PNP model with the semi-linear Poisson-Boltzmann equation and the existence 
of a unique solution to the PB equation (see Theorem \ref{theo2.1}).

In Section \ref{sec3}, we consider the homogenisation problem and the 
residual error estimate. At first, we state three auxiliary Lemmata
before stating the main homogenisation Theorem \ref{theo3.1}.

Finally, Section \ref{sec4} provides a brief discussion of the obtained results.

\section{Statment of the Problem}\label{sec2}

We start with the description of the geometry. 

\subsection{Geometry}\label{sec2.1}\hfill\\
Let $\omega$ denote the domain occupied by solid particles  of general shape (either single or multiple particles), which are located inside the unit cell $\Upsilon=(0,1)^d\subset\mathbb{R}^d$, $d=1,2,3$. We assume that all 
particles $\omega\subset\Upsilon$ are disjunctively located as well as bounded away from the boundary $\partial\Upsilon$, i.e. $\omega\cap\partial\Upsilon=\emptyset$.  

We assume that the boundary $\partial\omega$ is Lipschitz continuous with 
outer normal vector $\nu =(\nu_1,\dots,\nu_d)^\top$ pointing away from the domain $\omega$. Moreover, we distinguish the positive (outward orientated) surface $\partial\omega^+$ and the negative (inward orientated) surface $\partial\omega^-$ as the faces of the boundary $\partial\omega$, when approaching the boundary $\partial\omega$ from outside, i.e. from $\Upsilon\setminus\omega$ or from the inside, i.e. from $\omega$, 
respectively. For a two-dimensional example configuration see the illustration 
in Fig.~\ref{fig1} (a).

\begin{figure}[hbt!]
\begin{center}
\hspace*{-2cm}
\scalebox{.7} {\includegraphics{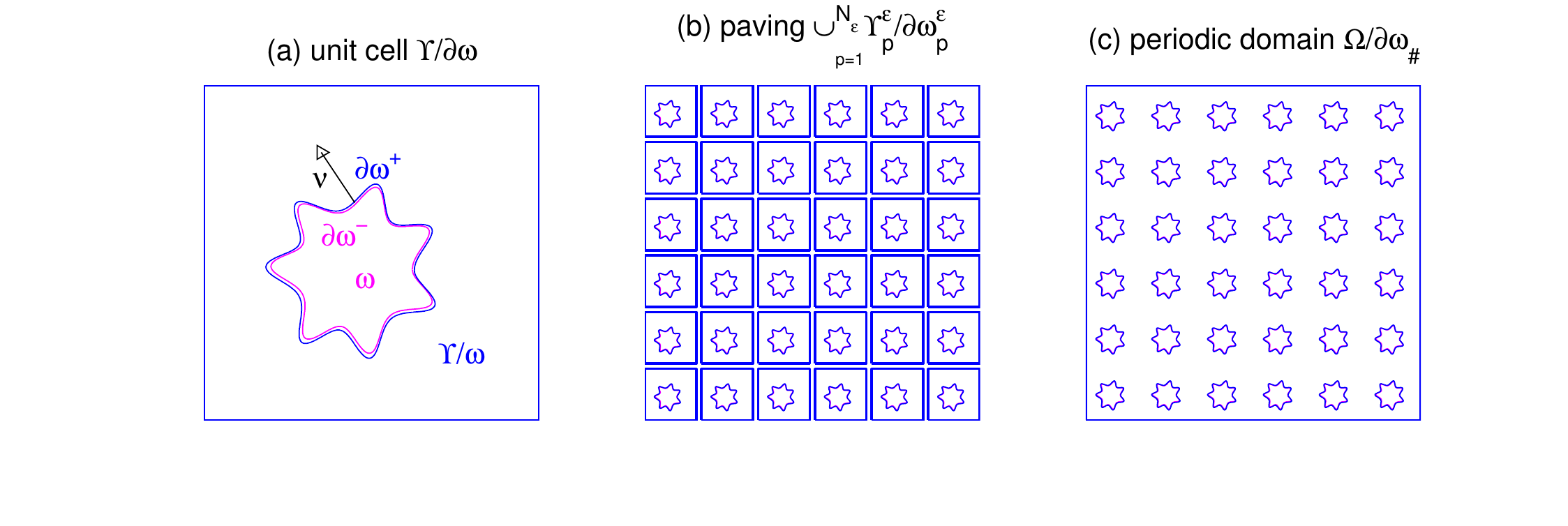}}
\vspace*{-1cm}
\caption{Two-dimensional example geometry with one star-shaped particle: (a) the unit cell, (b) the paving and (c)  the periodic disjoint domains $\Omega\setminus\partial\omega_\#$.}
\label{fig1}
\end{center}
\end{figure}
%hight=9in,width=3in

In the following, we consider a fixed, small homogenisation parameter $\eps\in\mathbb{R}_+$ and pave $\mathbb{R}^d$ with periodic cells $\Upsilon_p^\eps$ indexed by $p\in\mathbb{N}$. The periodic cells $\Upsilon_p^\eps$  
are constructed from $\Upsilon$ in the following way:
The position of every spatial point $x=(x_1,\dots,x_d)^\top\in\mathbb{R}^d$ 
can be decomposed as 
$$
x=\eps\left\lfloor\frac{x}{\eps}\right\rfloor +\eps\left\{\frac{x}{\eps}\right\}, 
\qquad\quad \left\lfloor\frac{x}{\eps}\right\rfloor\in\mathbb{Z}^d, \quad 
\left\{\frac{x}{\eps}\right\}\in\Upsilon,
$$ 
into the integer-valued floor function coordinates 
$\lfloor\frac{x}{\eps}\rfloor\in\mathbb{Z}^d$ and the fractional coordinates 
$\{\frac{x}{\eps}\}\in\Upsilon$. 
We shall then enumerate all possible integer vectors $\lfloor\frac{x}{\eps}\rfloor$ by means of a natural ordering with the index $p\in\mathbb{N}$. 
According to this index, we associate $\eps\lfloor\frac{x}{\eps}\rfloor$ with the $p$-th cell $\Upsilon_p^\eps$ 
and $\eps\{\frac{x}{\eps}\} =\eps y$ shall denote the local coordinates 
in all cells which correspond to $y\in\Upsilon$. 

We will denote by $\omega_p^\eps\subset\Upsilon_p^\eps$ the respective 
solid particles obtained by means of the paving with $\{\frac{x}{\eps}\} =y$ for $y\in\omega$. We note that the rescaling does not change the unit outer normal vector $\nu$. 

Evidently, the periodic mapping $x\mapsto y$, $\mathbb{R}^d\mapsto\Upsilon$, 
is surjective. 
This construction can be generalised to an arbitrary orthotope 
$\Upsilon$, see \cite{CDDGZ/12}. 
\medskip

Let $\Omega$ be the reference domain in $\mathbb{R}^d$ with Lipschitz 
boundary $\partial\Omega$ and denote again the outer normal vector by $\nu$. 
By reordering the index $p$, it is then possible to account for all solid particles $\omega_p^\eps\subset\Omega$ with the index set $p=1,\dots,N_\eps$, 
see \cite{CDDGZ/12,Fra/10}. We remark that $N_\eps\sim\eps^{-d}$. 

By omitting solid particles which are "too close" to the external boundary 
$\partial\Omega$, we shall ensure a constant gap with the distance $O(\eps)$ between $\partial\Omega$ and all particles $\omega_p^\eps$. 
Thus, $\Omega$ is divided into the multiple domains 
$\omega_\# :=\cup_{p=1}^{N_\eps} \omega_p^\eps$ corresponding to all the 
solid particles located periodically in the reference domain
and the remaining porous space $\Omega\setminus\omega_\#$. 

In the following, we shall denote by $\partial\omega_\# 
=\cup_{p=1}^{N_\eps} \partial\omega_p^\eps$ the union of 
boundaries $\partial\omega_p^\eps$ 
and introduce the disjoint multiple domains 
$$
\Omega\setminus\partial\omega_\# =(\Omega\setminus\omega_\#) 
\cup\omega_\#, \qquad\quad 
\partial\omega_\# =\cup_{p=1}^{N_\eps} \partial\omega_p^\eps, \qquad 
\omega_\# :=\cup_{p=1}^{N_\eps} \omega_p^\eps.
$$

Moreover, for functions $\xi$, which are discontinuous over the 
interface $\partial\omega_\#$, we will denote the jump across the interface by 
$$
[\![\xi]\!] :=\xi^+-\xi^-, \qquad \xi^\pm :=\xi|_{\partial\omega_\#^\pm}.
$$
Here, $\partial\omega_\#^+ =\cup_{p=1}^{N_\eps} (\partial\omega_p^\eps)^+$ 
summarises the positive faces (orientated towards the interior of the pore space $\Omega\setminus\omega_\#$), 
and $\partial\omega_\#^- =\cup_{p=1}^{N_\eps} (\partial\omega_p^\eps)^-$ 
accounts for the negative faces (orientated towards the interior of the solid phase 
$\omega_\#$). 

\subsection{Physical model}\label{sec2.2} \hfill\\
In the heterogeneous domain $\Omega\setminus\partial\omega_\#$, which consist of the 
particle volumes $\omega_\#$ and the porous space $\Omega\setminus\omega_\#$, we consider the electrostatic potential 
$\phi$ and $(n+1)$ components of concentrations of charged particles 
${c}=({c}_0,\dots,{c}_n)^\top$, $n\ge1$. 
The physical consistency requires positive concentrations ${c}>0$. 

At the external boundary $\partial\Omega$, we shall impose 
Dirichlet boundary conditions 
$\phi=\phi^{\rm bath}$ and ${c}={c}^{\rm bath}$ 
corresponding to a surrounding bath
and given by constant values $\phi^{\rm bath}\in\mathbb{R}$ and 
${c}^{\rm bath}=({c}^{\rm bath}_0,\dots,{c}^{\rm bath}_n)^\top 
\in\mathbb{R}^{n+1}_+$. 
We can then consider the normalised electrostatic potential  
$\phi-\phi^{\rm bath}$ and concentrations ${c}/{c}^{\rm bath}$ 
(i.e. ${c_s}/{c_s}^{\rm bath}$ for all $s=0,\dots,n$) and prescribe the
 following normalised Dirichlet conditions: 
\begin{equation}\label{2.1}
\phi=0,\qquad {c}=1\qquad\text{on $\partial\Omega$}. 
\end{equation}

In the following, all further relations will be formulated for the normalised 
potential and concentrations such that \eqref{2.1} holds. 

Let $z_s\in\mathbb{R}$ denote the electric charge of the $s$-th species 
with concentration ${c}_s$ for $s=0,\dots,n$. 
For the $n+1$- components of charges particles, we shall assume the following charge-neutrality 
\begin{equation}\label{2.2}
\sum_{s=0}^n z_s =0.
\end{equation}
A necessary condition for \eqref{2.2} is 
${\displaystyle\min_{s\in\{0,\dots,n\}}} z_s <0 
<{\displaystyle\max_{s\in\{0,\dots,n\}}} z_s $. 

The charge-neutrality assumption \eqref{2.2} 
implies also the following strong monotonicity property
\begin{equation}\label{2.3}
K |\xi|^2\le 
-\sum_{s=0}^n z_s\xi \exp({-z_s\xi})
\qquad\text{for all $\xi\in\mathbb{R}$}\qquad (K>0), 
\end{equation}
for a constant $K>0$, which 
follows
directly from Taylor expansion with respect to $(-z_s\xi)$.

We consider the following PNP steady-state system consisting of 
$(n+2)$ nonlinear, homogeneous equations: 
\begin{subequations}\label{2.4}
\begin{align}\label{2.4a}
-{\rm div} (\nabla {c}_s^\top D_s)&=0,
\qquad s=0,\dots,n,&&\quad\text{in }\omega_\#,\\
\label{2.4b}
-{\rm div} \bigl( (\nabla {c}_s 
+{\textstyle\frac{z_s}{{\kappa}T}} {c}_s \nabla\phi)^\top D_s\bigr) &=0,
\qquad s=0,\dots,n,&&\quad\text{in }\Omega\setminus\omega_\#,
\end{align}
\end{subequations}
\begin{subequations}\label{2.5}
\begin{align}\label{2.5a}
-{\rm div} (\nabla\phi^\top A^\eps) &=0, &&\text{in } \omega_\#,\\
\label{2.5b}
-{\rm div} (\nabla\phi^\top A^\eps) -\sum_{s=0}^n z_s {c}_s& =0, &&
\text{in } \Omega\setminus\omega_\#.
\end{align}
\end{subequations}
In both equations \eqref{2.4}, $D_s\in L^\infty(\Omega)^{d\times d}$, $D_s>0$, $s=0,\dots,n$ denote symmetric and positive definite diffusion matrices, which are in general discontinuous over $\partial\omega_\#$.
In \eqref{2.4b}, $\kappa>0$ is the Boltzmann constant, and $T>0$ is the temperature. We remark that the form of \eqref{2.4b} is based on assuming the Einstein relations for the mobilities. Moreover, eq. \eqref{2.4a} models the effect of charges particles being included into the solid particles, which is well known, for instance, for $Li^+$-ions, see e.g. \cite{Pic/11}. 
 
In \eqref{2.5}, $A\in L^\infty(\Upsilon)^{d\times d}$ denotes the symmetric and positive definite matrix of the electric permittivity, which 
oscillates periodically 
over cells according to $A^\eps(x) :=A(\{\frac{x}{\eps}\})$
and satisfies
\begin{equation}\label{2.6}
\begin{split}
&A^\top(y)=A(y),\qquad y\in\Upsilon\\ 
&\underline{K} |\xi|^2\le \xi^\top A(y)\xi \le\overline{K} |\xi|^2
\qquad\forall\xi\in\mathbb{R}^d, y\in\Upsilon, 
\qquad (0<\underline{K}<\overline{K}). 
\end{split}
\end{equation}
The entries of
the permittivity matrix $A$ are discontinuous functions in the cell $\Upsilon$ 
across the interface $\partial\omega$. 
A typical example
considers piecewise constant $A=\sigma_\omega I$ in $\omega$ and $A=\sigma_\Upsilon I$ in $\Upsilon\setminus\omega$, with material parameters $\sigma_\omega>0$ and $\sigma_\Upsilon>0$, where $I$ denotes here the identity matrix in $\mathbb{R}^{d\times d}$. 
In the following, we denote by $A_{ij}$, $i,j=1,\dots,d$, the matrix entries of $A$. 

From a physical point of view, \eqref{2.5a} represents Ohm's law in the solid phase. Moreover, we remark that the equations on $\omega_\#$, i.e. 
\eqref{2.4a} for ${c}$ and \eqref{2.5a} for $\phi$ are linear while the equations 
\eqref{2.4b} and \eqref{2.5b} on $\Omega\setminus\omega_\#$ form a coupled nonlinear problem  on the porous space. 
\medskip

The modelling of boundary conditions at the interfaces is a delicate issue. 
For the charge carries fluxes in \eqref{2.4}, we assume homogeneous Neumann conditions 
\begin{subequations}\label{2.7}
\begin{equation}\label{2.7a}
(\nabla {c}_s^-)^\top D_s\nu =0,
\qquad s=0,\dots,n,\quad\text{on}\quad \partial\omega_\#^-, 
\end{equation}
\begin{equation}\label{2.7b}
(\nabla {c}_s^+ +{\textstyle\frac{z_s}{{\kappa}T}} {c}_s^+ 
\nabla\phi^+)^\top D_s\nu =0,
\qquad s=0,\dots,n,\quad\text{on}\quad \partial\omega_\#^+.
\end{equation}
\end{subequations}
For the electrostatic potential in \eqref{2.5}, we suppose the Gouy--Chapman--Stern model for an Electric Double Layer (EDL) by assuming
the following inhomogeneous transmission boundary conditions 
(see \cite{Pic/11}): 
\begin{subequations}\label{2.8}
\begin{align}\label{2.8a}
(\nabla\phi^\top A^\eps)^- \nu -{\textstyle\frac{\alpha}{\eps}} 
[\![\phi]\!] &=\eps g,&&\text{on $\partial\omega_\#^-$},\\
\label{2.8b}
-(\nabla\phi^\top A^\eps)^+ \nu +{\textstyle\frac{\alpha}{\eps}} 
[\![\phi]\!] &=0, &&\text{on $\partial\omega_\#^+$}.
\end{align}
\end{subequations}
Here $\alpha\in\mathbb{R}_+$ and $g\in\mathbb{R}$ are material parameters 
given at the interface. 
We note that by summing \eqref{2.8a} and \eqref{2.8b}, we derive the relation 
\begin{equation}\label{2.9}
-[\![\nabla\phi^\top A^\eps]\!] \nu =\eps g,
\qquad\qquad\text{on $\partial\omega_\#$},
\end{equation}
implying that not only the electric potential $\phi$ but also fluxes $\nabla\phi^\top A^\eps\nu$ are discontinuous functions with jumps across the interface $\partial\omega_\#$. 

The asymptotic weights ${\textstyle\frac{1}{\eps}}$ in front of $[\![\phi]\!]$ 
and $\eps g$ at the right hand side of \eqref{2.8}, which were already mentioned in the introduction, 
shall be discussed in detail during the below asymptotic analysis as $\eps\searrow0^+$. 

We emphasise that the transmission conditions \eqref{2.8} couple 
the porous phase $\Omega\setminus\omega_\#$ with the solid phase $\omega_\#$ 
by means of the jump in $[\![\phi]\!]$. 
In fact, the transmission conditions \eqref{2.8} can be compared with the following two cases of simplified boundary conditions:
First, if $\phi$ were continuous across $\partial\omega_\#$, i.e. 
$[\![\phi]\!]=0$, then \eqref{2.8a} and \eqref{2.8b} would be decoupled 
into two usual Neumann boundary condition which do not represent the EDL. 
Second, if $\phi^-$ were known on the solid phase boundary 
$\partial\omega_\#^-$, then the model would reduced to a model on the porous 
space 
$\Omega\setminus\omega_\#$ with the following inhomogeneous Robin (Fourier) boundary condition (see \cite{FK})
\begin{equation*}
-(\nabla\phi^\top A^\eps)^+ \nu +{\textstyle\frac{\alpha}{\eps}} 
\phi^+ ={\textstyle\frac{\alpha}{\eps}} \phi^-, \qquad\text{on $\partial\omega_\#^+$}.
\end{equation*}
However, the subsequent homogenisation of this alternative model on the porous 
space 
$\Omega\setminus\omega_\#$ would nevertheless require a suitable continuation of 
$\phi^+$ onto $\omega_\#$. 

\subsection{Mathematical model}\label{sec2.3} \hfill\\
In the following, we shall amend the state variables with the superscript $\eps$ 
in order to highlight the dependency on the cell size.  

The physical model will be described
by the following weak variational formulation of the boundary value problem 
\eqref{2.1}, \eqref{2.4}--\eqref{2.5}, \eqref{2.7}--\eqref{2.8}: 
Find an electrostatic potential $\phi^\eps\in H^1(\Omega\setminus\partial\omega_\#)$ and $n+1$ components of charge carrier concentrations 
${c}^\eps\in H^1(\Omega\setminus\partial\omega_\#)^{n+1}\cap 
L^\infty(\Omega\setminus\partial\omega_\#)^{n+1}$ 
such that the concentrations are positive ${c}^\eps>0$ and satisfy 
\begin{equation}\label{2.10}
\phi^\eps =0,\qquad {c}^\eps =1\quad\text{on $\partial\Omega$},
\end{equation}
\begin{multline}\label{2.11}
\int_{\Omega\setminus\partial\omega_\#} 
\bigl(\nabla {c}^\eps_s +\chi_{{}_{\Omega\setminus\omega_\#}} 
{\textstyle\frac{z_s}{{\kappa}T}}\, {c}^\eps_s\, \nabla\phi^\eps 
\bigr)^\top D_s \nabla {c}_s \,dx =0,\qquad s=0,\dots,n,\\
\text{for all test-functions ${c}\in H^1(\Omega\setminus\partial\omega_\#)^{n+1}$: 
${c}=0$ on $\partial\Omega$},
\end{multline}
\begin{multline}\label{2.12}
\int_{\Omega\setminus\partial\omega_\#} \!\!\! \bigl(  
(\nabla\phi^\eps)^\top A^\eps\nabla\phi 
-\chi_{{}_{\Omega\setminus\omega_\#}}
\sum_{s=0}^n z_s {c}^\eps_s \phi \bigr)\,dx
+\int_{\partial\omega_\#} \!\!\! {\textstyle\frac{\alpha}{\eps}} 
[\![\phi^\eps]\!] [\![\phi]\!] \,dS_x\\
=\int_{\partial\omega_\#^-} \eps g \phi^- \,dS_x\qquad\
\text{for all $\phi\in H^1(\Omega\setminus\partial\omega_\#)$: 
$\phi=0$ on $\partial\Omega$}.
\end{multline}
Here $\chi_{{}_{\Omega\setminus\omega_\#}}$ denotes the characteristic function of the set $\Omega\setminus\omega_\#$. 

\begin{proposition}\label{prop2.1}
For strong solutions $(\phi^\eps,{c}^\eps)$,  
the variational system \eqref{2.10}--\eqref{2.12} and 
the boundary value problem \eqref{2.1}, \eqref{2.4}--\eqref{2.5}, \eqref{2.7}--\eqref{2.8} are equivalent. 
\end{proposition}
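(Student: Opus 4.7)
The proof reduces to a standard two-domain integration-by-parts argument applied separately on the solid phase $\omega_\#$ and the porous phase $\Omega\setminus\omega_\#$, using the geometric identification of $\nu$ as outward from $\omega_\#$ and the trace conventions $\xi^\pm=\xi|_{\partial\omega_\#^\pm}$ from Section~\ref{sec2.1}. The plan is to verify both implications by exhibiting the exact boundary bookkeeping that transforms the classical interface data into the jump integrals of \eqref{2.12} (and vice versa for \eqref{2.11}, which is simpler since all interface fluxes vanish there).

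For the direction ``strong $\Rightarrow$ weak,'' I would take an admissible test function $\phi$ vanishing on $\partial\Omega$, split $\int_{\Omega\setminus\partial\omega_\#}=\int_{\omega_\#}+\int_{\Omega\setminus\omega_\#}$, and apply Green's formula in each piece. The outward normal from $\omega_\#$ is $+\nu$ (picking up the $-$ trace), while the outward normal from $\Omega\setminus\omega_\#$ at $\partial\omega_\#$ is $-\nu$ (picking up the $+$ trace). The bulk divergences give $0$ by \eqref{2.5a} and $\sum_s z_s c_s^\eps$ by \eqref{2.5b}, leaving the interface contribution
\begin{equation*}
\int_{\partial\omega_\#}\bigl\{((\nabla\phi^\eps)^\top A^\eps)^-\nu\,\phi^- - ((\nabla\phi^\eps)^\top A^\eps)^+\nu\,\phi^+\bigr\}\,dS_x.
\end{equation*}
Substituting the conormal fluxes from \eqref{2.8a}, \eqref{2.8b}, the products rearrange as $-\tfrac{\alpha}{\eps}[\![\phi^\eps]\!][\![\phi]\!]+\eps g\,\phi^-$, which, after moving the first term to the left-hand side, is exactly \eqref{2.12}. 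For \eqref{2.11}, the same argument with test functions $c\in H^1$ vanishing on $\partial\Omega$ applied to the linear fluxes of \eqref{2.4a}, \eqref{2.4b} produces zero boundary contributions by the homogeneous Neumann conditions \eqref{2.7a}, \eqref{2.7b}, and the bulk terms collapse onto the stated weak form (the characteristic function $\chi_{{}_{\Omega\setminus\omega_\#}}$ correctly localises the drift contribution to the porous phase).

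For the converse ``weak $\Rightarrow$ strong,'' I would proceed in the usual three steps by localising the test functions. First, choose $\phi\in C_c^\infty(\omega_\#)$; all interface and external boundary terms vanish and the $\chi_{{}_{\Omega\setminus\omega_\#}}$ contribution drops, yielding \eqref{2.5a} in the sense of distributions, hence pointwise for strong solutions. Second, choose $\phi\in C_c^\infty(\Omega\setminus\overline{\omega_\#})$ to recover \eqref{2.5b}. Third, with these PDEs in hand, integrate the weak form by parts backwards for a general admissible $\phi$; the bulk cancels and only the interface integral
\begin{equation*}
\int_{\partial\omega_\#}\bigl\{((\nabla\phi^\eps)^\top A^\eps)^-\nu-\tfrac{\alpha}{\eps}[\![\phi^\eps]\!]-\eps g\,\chi_{{}_{\partial\omega_\#^-}}\bigr\}\phi^-\,dS_x-\int_{\partial\omega_\#}\bigl\{((\nabla\phi^\eps)^\top A^\eps)^+\nu-\tfrac{\alpha}{\eps}[\![\phi^\eps]\!]\bigr\}\phi^+\,dS_x=0
\end{equation*}
remains. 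Since the traces $\phi^\pm$ on $\partial\omega_\#$ are independent and can be prescribed arbitrarily (by choosing test functions supported in a thin tubular neighbourhood on each side), the two bracketed expressions vanish separately, which is \eqref{2.8a}, \eqref{2.8b}. The same localisation strategy applied to \eqref{2.11} delivers \eqref{2.4a}, \eqref{2.4b} together with \eqref{2.7a}, \eqref{2.7b}.

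The only real pitfall is the sign bookkeeping: the asymmetric roles of $\partial\omega_\#^+$ and $\partial\omega_\#^-$ in \eqref{2.8} together with the definition $[\![\phi]\!]=\phi^+-\phi^-$ produce a minus sign that must land in the correct place for the $\tfrac{\alpha}{\eps}$--term in \eqref{2.12} to match. I would therefore perform the interface computation once, very explicitly, and then reuse the identity in both directions. Apart from this, the argument is entirely standard and requires only the Lipschitz regularity of $\partial\omega_\#$ and $\partial\Omega$ assumed in Section~\ref{sec2.1} together with the tacit smoothness implicit in the term ``strong solution.''
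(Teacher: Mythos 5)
Your proof is correct and follows essentially the same route as the paper: split the disjoint domain into $\omega_\#$ and $\Omega\setminus\omega_\#$, apply Green's formula in each piece (the paper displays exactly the two Green identities \eqref{13a}, \eqref{13b} and their sum \eqref{green} as the key tool), substitute the transmission data on the interface to obtain the jump integrals, and conversely localise test functions to recover the PDEs and then the interface conditions. You merely make the sign/trace bookkeeping explicit where the paper only sketches it, and correctly identify the only subtlety (the sign produced by $[\![\phi]\!]=\phi^+-\phi^-$ together with the asymmetric orientation of $\partial\omega_\#^\pm$ in \eqref{2.8}).
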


\begin{proof}
The assertion can be verified by usual variational arguments, which we briefly sketch for the sake of the reader.

The variational equations \eqref{2.11} and \eqref{2.12} are derived 
by multiplying the equations \eqref{2.4}--\eqref{2.5} with test-functions 
and subsequent integration by parts over
$\Omega\setminus\omega_\#$ and 
$\omega_\#$ due to boundary conditions \eqref{2.1} and \eqref{2.7}--\eqref{2.8}.

In return, given strong solutions $(\phi^\eps,{c}^\eps)$, the 
boundary
value problem \eqref{2.4}--\eqref{2.5}, \eqref{2.7}--\eqref{2.8} is obtained 
by varying the test-functions $(\phi,{c})$ in \eqref{2.11}, \eqref{2.12} 
and with the help of the following Green's 
formulas: By recalling the $\nu$ denotes both the outer normal on $\partial\Omega$ and 
$\partial\omega$, we have 
for all $p\in L^2_{\rm div}(\Omega\setminus\partial\omega_\#)^d$
\begin{subequations}\label{13}
\begin{multline}\label{13a}
\int_{\Omega\setminus\omega_\#} \!\!\! p^\top \nabla v\,dx
=-\int_{\Omega\setminus\omega_\#} \!\!\! v\,{\rm div}(p)\,dx
-\int_{\partial\omega_\#^+} p^\top v\nu \,dS_x\\
+\int_{\partial\Omega} p^\top v\nu \,dS_x,\qquad
\forall\ v\in H^1(\Omega\setminus\omega_\#), 
\end{multline}
\begin{equation}\label{13b}
\int_{\omega_\#} \!\!\! p^\top \nabla v\,dx
=-\int_{\omega_\#} \!\!\! v\,{\rm div}(p)\,dx
+\int_{\partial\omega_\#^-} p^\top v\nu \,dS_x,\qquad
\forall v\in H^1(\omega_\#),
\end{equation}
\end{subequations}
which are valid on $\Omega\setminus\omega_\#$ and $\omega_\#$, respectively. 
Hence, by suming \eqref{13a} and \eqref{13b}, we obtain the Green's formula 
representation 
\begin{equation}\label{green}
\int_{\Omega\setminus\partial\omega_\#} \!\!\! p^\top \nabla v\,dx
=-\int_{\Omega\setminus\partial\omega_\#} \!\!\! v\,{\rm div}(p)\,dx
-\int_{\partial\omega_\#} [\![p^\top v]\!]\nu \,dS_x
+\int_{\partial\Omega} p^\top v\nu \,dS_x,
\end{equation}
which holds on the disjoint domain $\Omega\setminus\partial\omega_\#$ 
for all $p\in L^2_{\rm div}(\Omega\setminus\partial\omega_\#)^d$ and  
$v\in H^1(\Omega\setminus\partial\omega_\#)$, 
see e.g. \cite{KK/00}. 
\end{proof}

The following Proposition \ref{prop2.2} states the crucial 
observation that introducing Boltzmann statistics 
allows to decouple the system of the homogeneous equations \eqref{2.11} 
and derive an equivalent scalar semi-linear Poisson-Boltzmann (PB) equation. 

\begin{proposition}\label{prop2.2}
The system \eqref{2.10}--\eqref{2.12} it is equivalent to 
the following nonlinear Poisson-Boltzmann equation: 
Find $\phi^\eps\in H^1(\Omega\setminus\partial\omega_\#)$ such that 
\begin{subequations}\label{2.13}
\begin{equation}\label{2.13a}
\phi^\eps =0\quad\text{on $\partial\Omega$},\\
\end{equation}
\begin{multline}\label{2.13b}
\int_{\Omega\setminus\partial\omega_\#} \bigl(  
(\nabla\phi^\eps)^\top A^\eps\nabla\phi 
-\sum_{s=0}^n z_s e^{- 
{\textstyle\frac{z_s}{{\kappa}T}}\, \chi_{{}_{\Omega\setminus\omega_\#}}\phi^\eps} \phi \bigr)\,dx\\
+\int_{\partial\omega_\#} \!\!\! {\textstyle\frac{\alpha}{\eps}} 
[\![\phi^\eps]\!] [\![\phi]\!] \,dS_x 
=\int_{\partial\omega_\#^-} \eps g \phi^- \,dS_x\\
\text{for all test-functions $\phi\in H^1(\Omega\setminus\partial\omega_\#)$: 
$\phi=0$ on $\partial\Omega$},
\end{multline}
\end{subequations}
together with the Boltzmann statistics determining ${c}^\eps$ from 
$\phi^\eps$, i.e.
\begin{equation}\label{2.14}
\begin{split}
{c}^\eps_s &=\exp\bigl( -{\textstyle\frac{z_s}{{\kappa}T}} \phi^\eps \bigr),
\qquad s=0,\dots,n,
\quad\text{a.e. on } \Omega\setminus\omega_\#,\\
{c}^\eps_s& \in\mathbb{R}_+,\qquad\qquad\qquad\, s=0,\dots,n,\quad \text{in } \omega_\#.
\end{split}
\end{equation}
\end{proposition}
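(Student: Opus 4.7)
The plan is to establish the equivalence in both directions by exploiting a Slotboom-type logarithmic substitution that linearises the drift-diffusion flux appearing in \eqref{2.11}.

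For the forward implication, suppose $(\phi^\eps,c^\eps)$ solves \eqref{2.10}--\eqref{2.12}. I would first test \eqref{2.11} against functions supported only on $\omega_\#$. Since $H^1(\Omega\setminus\partial\omega_\#)$ admits arbitrary jumps across $\partial\omega_\#$, any $\psi\in H^1(\omega_\#)$ extended by zero is an admissible test function, and the indicator $\chi_{{}_{\Omega\setminus\omega_\#}}$ annihilates the drift term, leaving $\int_{\omega_\#}(\nabla c_s^\eps)^\top D_s\nabla\psi\,dx=0$. Coercivity of $D_s$ together with the choice $\psi=c_s^\eps$ then forces $\nabla c_s^\eps=0$ on $\omega_\#$, i.e.\ $c_s^\eps$ is constant on each connected component, which is the second line of \eqref{2.14}.

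On the porous phase $\Omega\setminus\omega_\#$, I would introduce $u_s:=\ln c_s^\eps+\tfrac{z_s}{{\kappa}T}\phi^\eps$, which lies in $H^1(\Omega\setminus\omega_\#)$ thanks to $c_s^\eps\in H^1\cap L^\infty$ being bounded uniformly away from zero, and which satisfies $u_s=0$ on $\partial\Omega$ by \eqref{2.10} together with the normalisations $c_s^\eps=1$, $\phi^\eps=0$. The identity $\nabla c_s^\eps+\tfrac{z_s}{{\kappa}T}c_s^\eps\nabla\phi^\eps=c_s^\eps\nabla u_s$ allows me to rewrite \eqref{2.11} restricted to the porous phase, and choosing the test function equal to $u_s$ (extended by zero onto $\omega_\#$) reduces it to $\int_{\Omega\setminus\omega_\#}c_s^\eps(\nabla u_s)^\top D_s\nabla u_s\,dx=0$. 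Positivity of $c_s^\eps$ and positive definiteness of $D_s$ yield $\nabla u_s=0$; connectedness of the porous phase together with the vanishing Dirichlet trace forces $u_s\equiv 0$, which is the first line of \eqref{2.14}. Substituting the resulting Boltzmann expression for $c_s^\eps$ into \eqref{2.12} then produces \eqref{2.13b}. The reverse direction is immediate: given $\phi^\eps$ solving \eqref{2.13} and $c_s^\eps$ defined by \eqref{2.14}, differentiating the Boltzmann exponential yields $\nabla c_s^\eps+\tfrac{z_s}{{\kappa}T}c_s^\eps\nabla\phi^\eps\equiv 0$ on $\Omega\setminus\omega_\#$, while $\nabla c_s^\eps\equiv 0$ on $\omega_\#$, so the integrand of \eqref{2.11} vanishes pointwise, and \eqref{2.12} reduces to \eqref{2.13b} by direct substitution.

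The main technical obstacle will be justifying the logarithmic substitution rigorously: the composition $\ln c_s^\eps$ lies in $H^1$ only under a uniform positive lower bound on $c_s^\eps$, not merely its pointwise positivity assumed in the variational formulation. I would secure such a bound either by a De Giorgi/Stampacchia truncation applied to \eqref{2.4b} combined with the Dirichlet datum $c_s^\eps=1$ and the homogeneous Neumann flux on $\partial\omega_\#^+$, or, more robustly, by working with the regularised transform $u_s^\delta:=\ln(c_s^\eps+\delta)+\tfrac{z_s}{{\kappa}T}\phi^\eps$, performing the energy identity at fixed $\delta>0$, and passing $\delta\searrow 0^+$ via dominated convergence. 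A secondary point is the connectedness of $\Omega\setminus\omega_\#$, which is guaranteed by the standing geometric hypothesis that the particles $\omega_p^\eps$ are pairwise disjoint and bounded away from $\partial\Omega$.
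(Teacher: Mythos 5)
Your approach is essentially the paper's: both substitute the entropy variable $\mu_s^\eps=\ln c_s^\eps$ (your $u_s$ is precisely $\mu_s^\eps+\tfrac{z_s}{\kappa T}\phi^\eps$ on the porous phase, which is the paper's test function there), factor the drift-diffusion flux as $c_s^\eps\nabla u_s$, test with $u_s$, and conclude constancy to obtain \eqref{2.14}, then substitute into \eqref{2.12}. Two remarks: the step from \eqref{2.12} to \eqref{2.13b} also relies on the charge-neutrality \eqref{2.2} to reconcile the integrands on $\omega_\#$ (where $\chi_{{}_{\Omega\setminus\omega_\#}}=0$ makes $\sum_s z_s e^0=0$), which your writeup omits; and your observation that a uniform positive lower bound on $c_s^\eps$ is needed to place $\ln c_s^\eps$ in $H^1$ correctly flags a regularity subtlety that the paper leaves implicit, and either of the fixes you sketch would close it.
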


\begin{proof}
Starting with \eqref{2.10}--\eqref{2.12}, we shall first prove the Boltzmann statistics \eqref{2.14} by introducing the entropy variables (the chemical potentials) 
\begin{equation}\label{2.15}
\begin{split}
&\mu^\eps_s := \ln {c}^\eps_s,\qquad s=0,\dots,n.
\end{split}
\end{equation}
Then, eq. \eqref{2.11} can be rewritten in terms of \eqref{2.15} in divergence form as 
\begin{multline}\label{2.16}
\int_{\Omega\setminus\partial\omega_\#} {c}^\eps_s 
\nabla \bigl( \mu^\eps_s +\chi_{{}_{\Omega\setminus\omega_\#}} 
{\textstyle\frac{z_s}{{\kappa}T}} \phi^\eps 
\bigr)^\top D_s \nabla {c}_s \,dx =0,\qquad s=0,\dots,n,\\
\text{for all test-functions ${c}\in H^1(\Omega\setminus\partial\omega_\#)^{n+1}$: 
${c}=0$ on $\partial\Omega$}.
\end{multline}
Due to the boundary condition \eqref{2.10}, we have 
$\phi^\eps =0=\mu^\eps$ on $\partial\Omega$ and
the test-function ${c}_s =\mu^\eps_s +\chi_{{}_{\Omega\setminus\omega_\#}} 
{\textstyle\frac{z_s}{{\kappa}T}} \phi^\eps$ can be inserted into \eqref{2.16}. 
Hence, by recalling that $D_s$ are symmetric and positive definite matrices 
and ${c}^\eps>0$, we derive the identity
$
\nabla\bigl(\mu^\eps_s +\chi_{{}_{\Omega\setminus\omega_\#}} 
{\textstyle\frac{z_s}{{\kappa}T}} \phi^\eps\bigr) =0,
$ $s=0,\dots,n,$ a.e. in $\Omega\setminus\partial\omega_\#$. 
Using again the boundary condition \eqref{2.10}, we conclude
\begin{equation}\label{2.17}
\mu^\eps_s +\chi_{{}_{\Omega\setminus\omega_\#}} 
{\textstyle\frac{z_s}{{\kappa}T}} \phi^\eps =0,\qquad s=0,\dots,n,\quad
\text{a.e. in } \Omega\setminus\omega_\#, 
\end{equation}
 and $\mu^\eps_s$ is an arbitrary constant in $\omega_\#$. 
This fact together with \eqref{2.15} implies \eqref{2.14}. 
By substituting the expressions \eqref{2.14} into equation \eqref{2.12} 
and by using the charge-neutrality \eqref{2.2} on $\omega_\#$, equation \eqref{2.13b} follows directly. 

Conversely, the equations \eqref{2.10}--\eqref{2.12} follow evidently from \eqref{2.13} and \eqref{2.14}. This completes the proof. 
\end{proof}

We remark that the concentrations ${c}^\eps$ in \eqref{2.14} are unique 
up to fixing the constant positive values within the solid particles $\omega_\#$. 
\medskip

By exploiting Proposition~\ref{prop2.2}, we construct a solution $(\phi^\eps,{c}^\eps)$ 
for the variational problem \eqref{2.10}--\eqref{2.12} from the 
scalar problem \eqref{2.13} for the potential $\phi^\eps$. 
The $n+1$ concentrations ${c}^\eps$ are afterwards explicitly determined by \eqref{2.14}. 

\begin{theorem}\label{theo2.1}
There exists the unique solution $\phi^\eps$ to the semilinear problem 
\eqref{2.13} satisfying the following residual estimate
\begin{equation}\label{2.18}
\|\nabla \phi^\eps\|_{L^2(\Omega\setminus\partial\omega_\#)}^2 
+{\textstyle\frac{1}{\eps}
\|[\![\phi^\eps]\!]\|_{L^2(\partial\omega_\#)}^2 
+\|\phi^\eps\|_{L^2(\Omega\setminus\omega_\#)}^2}
={\rm O} (1),
\end{equation}
which is uniform with respect to $\eps>0$. 
\end{theorem}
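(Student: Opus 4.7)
My plan is to prove existence and uniqueness by recognising \eqref{2.13b} as the Euler--Lagrange equation of a strictly convex coercive functional on the affine space $\{\phi \in H^1(\Omega\setminus\partial\omega_\#): \phi=0 \text{ on } \partial\Omega\}$, and then to derive \eqref{2.18} by testing \eqref{2.13b} with $\phi^\eps$ itself. The main subtlety lies in controlling the forcing term on $\partial\omega_\#^-$ uniformly in $\eps$, since the surface measure $|\partial\omega_\#|$ grows like $\eps^{-1}$.

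For existence and uniqueness, I would introduce the energy
\begin{equation*}
J^\eps(\phi) := \tfrac{1}{2}\!\int_{\Omega\setminus\partial\omega_\#}\! (\nabla\phi)^\top A^\eps \nabla\phi\, dx + \kappa T \!\!\int_{\Omega\setminus\omega_\#}\! \sum_{s=0}^n e^{-\frac{z_s}{\kappa T}\phi} dx + \tfrac{\alpha}{2\eps}\!\int_{\partial\omega_\#}\! [\![\phi]\!]^2 dS_x - \eps\! \int_{\partial\omega_\#^-}\! g\phi^- dS_x,
\end{equation*}
whose Gateaux derivative reproduces \eqref{2.13b}. The first and third terms are convex by \eqref{2.6} and $\alpha>0$; the second is strictly convex as a finite sum of exponentials in $\phi$; the last is linear. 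Strict convexity yields uniqueness, while the a priori bound obtained below (together with a discontinuous Poincare inequality on $\Omega\setminus\partial\omega_\#$ for fixed $\eps$) provides coercivity, so the direct method of the calculus of variations delivers a unique minimiser.

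For the a priori estimate, I would insert $\phi = \phi^\eps$ into \eqref{2.13b}. The ellipticity \eqref{2.6} and the monotonicity \eqref{2.3} (applied with $\xi = \phi^\eps/(\kappa T)$ and rescaled) bound the left-hand side below by a positive multiple of the three quantities appearing in \eqref{2.18}. For the right-hand side, I would decompose $(\phi^\eps)^- = (\phi^\eps)^+ - [\![\phi^\eps]\!]$ and apply Cauchy--Schwarz:
\begin{equation*}
\Bigl|\eps\!\int_{\partial\omega_\#^-}\! g\,(\phi^\eps)^- dS_x\Bigr| \le \eps|g|\,|\partial\omega_\#|^{1/2}\bigl(\|(\phi^\eps)^+\|_{L^2(\partial\omega_\#)} + \|[\![\phi^\eps]\!]\|_{L^2(\partial\omega_\#)}\bigr).
\end{equation*}
Since $|\partial\omega_\#|^{1/2} = O(\eps^{-1/2})$, this prefactor is $O(\eps^{1/2})$, while the outer trace is controlled by the scaled trace inequality
\begin{equation*}
\|(\phi^\eps)^+\|_{L^2(\partial\omega_\#)}^2 \le C\bigl(\eps^{-1}\|\phi^\eps\|_{L^2(\Omega\setminus\omega_\#)}^2 + \eps\|\nabla\phi^\eps\|_{L^2(\Omega\setminus\omega_\#)}^2\bigr),
\end{equation*}
obtained by summing the unit-cell trace inequality over the $N_\eps \sim \eps^{-d}$ periodic cells. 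A Young's inequality with small parameter then absorbs all contributions into the coercive left-hand side and yields \eqref{2.18}.

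The main obstacle is precisely the calibration of three competing scales: the $\tfrac{1}{\eps}$ weight on $[\![\phi^\eps]\!]^2$ in the bilinear form, the $\eps$ weight multiplying $g$, and the $\eps^{-1}$ growth of $|\partial\omega_\#|$. These weights are tuned so that the boundary forcing yields an $O(1)$ contribution fully absorbable into the coercive terms; any other choice would either destroy the uniform-in-$\eps$ bound or cause the source to vanish in the limit, which is exactly the dichotomy on critical asymptotic rates announced in the introduction.
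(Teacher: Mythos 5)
Your a priori estimate argument (testing \eqref{2.13b} with $\phi^\eps$, using \eqref{2.3} on $\Omega\setminus\omega_\#$, and the scaled trace inequality to control $\int_{\partial\omega_\#^-}\eps g (\phi^\eps)^- dS_x$) agrees in substance with the paper, which simply cites the trace bound \eqref{2.20} from \cite{BPC/01} rather than re-deriving the $\eps$-scaling as you do. That part is fine, and your accounting of the three competing $\eps$-weights is exactly the correct reason the bound is uniform.

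The gap is in the existence step. You propose to obtain $\phi^\eps$ as the minimiser of the convex functional $J^\eps$ and read off \eqref{2.13b} as its Euler--Lagrange equation. But the paper explicitly identifies that \emph{this is precisely what fails}: the nonlinear term $\sum_s e^{-\frac{z_s}{\kappa T}\phi}$ has unbounded exponential growth, and for $d=2,3$ the space $H^1$ does not embed into $L^\infty$, so $J^\eps$ is proper, convex and lower semicontinuous but \emph{not} G\^{a}teaux differentiable along general $H^1$ directions. The direct method gives a minimiser, but you cannot conclude that the minimiser satisfies the variational equation \eqref{2.13b} without first showing, e.g., that the minimiser is essentially bounded, which requires an additional truncation or comparison argument. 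The paper resolves exactly this by a thresholding construction (citing \cite{LC/06}); your proposal does not address the issue at all and presents the E--L step as automatic. Likewise, "strict convexity yields uniqueness" applies to the minimisation problem, not automatically to weak solutions of \eqref{2.13b}; a cleaner route for uniqueness is to subtract the equations for two weak solutions, test with their difference, and invoke the monotonicity of $\xi\mapsto-\sum_s z_s e^{-z_s\xi/(\kappa T)}$ that underlies \eqref{2.3}, but again this presumes the integrals make sense, which circles back to the same integrability issue.

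In short: the estimate \eqref{2.18} is correctly derived, but the existence-and-uniqueness half of your plan would not go through as written because it ignores the non-differentiability of the energy caused by the exponential nonlinearity, which is precisely the obstruction the paper singles out and handles by thresholding.
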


\begin{proof}
We first emphasise that 
for the first two terms on the left hand side of \eqref{2.18}
the following discontinuous version of Poincare's inequality for homogeneous Dirichlet condition \eqref{2.13a} 
holds on 
the multiple domains $\Omega\setminus\partial\omega_\#$ without interfaces $\partial\omega_\#$ (see e.g. \cite{Hum/00,Orl/12}):
\begin{equation}\label{2.19}
K_0 \|\phi^\eps\|_{H^1(\Omega\setminus\partial\omega_\#)}^2 
\le\|\nabla \phi^\eps\|_{L^2(\Omega\setminus\partial\omega_\#)}^2 
+{\textstyle\frac{1}{\eps}} 
\|[\![\phi^\eps]\!]\|_{L^2(\partial\omega_\#)}^2, 
\quad (K_0>0).
\end{equation}
Therefore, the lower estimate \eqref{2.19} together with  \eqref{2.3} ensures 
the coercivity of the operator of the problem \eqref{2.13b}. 

The main difficulty of the existence proof arises from the unbounded, exponential growth of the nonlinear term in \eqref{2.13b}.
While classic existence theorems on quasilinear equations are thus not applicable here,  
the solution can nevertheless be constructed by a thresholding, 
see e.g. \cite{LC/06} and references therein for the details. 
%Below we estimate the solution. 

To derive the estimate \eqref{2.18}, it suffices to insert $\phi=\phi^\eps$ as the test-function 
in the variational equation \eqref{2.13b} and apply \eqref{2.3} in order to estimate below the nonlinear term 
at the left hand side 
of \eqref{2.13b}. 
Finally the right hand side of \eqref{2.13b} can be estimated by means of the following trace theorem 
\begin{equation}\label{2.20}
\begin{split}
&\int_{\partial\omega_\#^-} \eps g \phi^- \,dS_x
\le |g| \|\phi\|_{H^1(\Omega\setminus\partial\omega_\#)}, 
\end{split}
\end{equation}
see \cite{BPC/01} for the details. 
This completes the proof. 
\end{proof}

We remark that in the following Section \ref{sec3}, we will refine the residual error estimate \eqref{2.18} 
by means of asymptotic analysis as $\eps\searrow0^+$ and homogenisation. 

\section{Homogenisation and residual error estimate}\label{sec3}

We start the homogenisation procedure with three auxiliary cell problems. 
The first two cell problems serve to expand the inhomogeneous boundary traction $g$ 
and the volume potential of the variational problem \eqref{2.13} from the porous space 
$\Omega\setminus\omega_\#$ onto the whole domain $\Omega\setminus\partial\omega_\#$. 

The third cell problem is needed to decompose the matrix $A^\eps$ of 
oscillating coefficients in the cells with respect to small $\eps\searrow0^+$. 
This procedure will result in a regular asymptotic decomposition of 
the perturbation problem with a subsequent error estimate of the corrector term. 

For a generic cell $\Upsilon$, we introduce the Sobolev space $H^1_\#(\Upsilon)$ 
of functions which can be extended periodically to $H^1(\mathbb{R}^d)$. 
This requires matching traces on the opposite faces of $\partial\Upsilon$. 
Moreover, we shall denote by $H^1_\#(\Upsilon\setminus\partial\omega)$ those periodic functions, 
which are discontinuous, i.e. allow jumps across the interface $\partial\omega$. 

\subsection{Auxiliary results}\label{sec3.1}\hfill\\
We state the first auxiliary cell problem as follows: 
Find $L\in H^1(\Upsilon\setminus\partial\omega)$ such that 
\begin{multline}\label{3.1}
\int_{\Upsilon\setminus\partial\omega} 
(\nabla L^\top A\nabla u +L u) \,dy
=\int_{\partial\omega^-} u^- \,dS_y\\
\text{for all test-functions } u\in H^1(\Upsilon\setminus\partial\omega).
\qquad
\end{multline}
In view of the homogenisation result stated in Theorem 
\ref{theo3.1} in Section~\ref{sec3.2} below, the auxiliary problem \eqref{3.1} serves to expand the inhomogeneity of the boundary condition 
\eqref{2.8a} given by the material parameter $g$ 
in terms of the weak formulation stated in \eqref{2.13b}. 

The existence of a unique solution $L$ in \eqref{3.1} follows via standard elliptic theory from the assumed properties \eqref{2.6} of $A$.
With its help, we are able to prove the following result. 

\begin{lemma}[The cell boundary-traction problem]\label{lem3.1}\hfill\\
For all test-fucntions $\phi\in H^1(\Omega\setminus\partial\omega_\#)$: 
$\phi=0$ on $\partial\Omega$ holds the following expansion
\begin{equation}\label{3.2}
\int_{\partial\omega_\#^-} \eps g \phi^- \,dS_x
-\int_{\Omega\setminus\partial\omega_\#} 
{\textstyle\frac{|\partial\omega|}{|\Upsilon|}} g\phi\,dx
=\eps\, l_1(\phi),
\end{equation}
where $l_1: H^1(\Omega\setminus\partial\omega_\#)\mapsto\mathbb{R}$ 
is a linear form satisfying
\begin{equation}\label{3.3}
|l_1(\phi)|\le K\|\phi\|_{H^1(\Omega\setminus\partial\omega_\#)}, \qquad (K>0). 
\end{equation}
\end{lemma}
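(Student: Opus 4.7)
The plan is to use the auxiliary cell function $L$ from \eqref{3.1} to convert the boundary integral into a volume integral via a cell-wise weak formulation. Set $L^\eps(x) := L(\{x/\eps\})$ for the scaled cell function.

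First, I would test the weak form of \eqref{3.1} inside each cell $\Upsilon_p^\eps \subset \Omega$ with the restricted function $\phi|_{\Upsilon_p^\eps} \in H^1(\Upsilon_p^\eps \setminus \partial\omega_p^\eps)$. The change of variables $y = (x - x_p)/\eps$ produces the cellwise identity
\[
\eps^2 \int_{\Upsilon_p^\eps \setminus \partial\omega_p^\eps} \nabla L^{\eps\top} A^\eps \nabla\phi \, dx + \int_{\Upsilon_p^\eps \setminus \partial\omega_p^\eps} L^\eps \phi \, dx = \eps \int_{\partial\omega_p^\eps{}^-} \phi^- \, dS_x.
\]
Summing over all $N_\eps$ cells and setting $\Omega_\eps := \cup_p \Upsilon_p^\eps$ delivers a global identity; the complement $\Omega \setminus \Omega_\eps$ is a particle-free strip of width $O(\eps)$.

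Second, I would identify the averaged coefficient: testing \eqref{3.1} with $u \equiv 1$ yields $\int_{\Upsilon\setminus\partial\omega} L \, dy = |\partial\omega|$, so the cell mean of $L$ equals $|\partial\omega|/|\Upsilon|$. Multiplying the global identity by $g$ and subtracting the target averaged integral decomposes $\eps\, l_1(\phi)$ into three pieces: (i) the gradient term $\eps^2 g \int_{\Omega_\eps \setminus \partial\omega_\#} \nabla L^{\eps\top} A^\eps \nabla\phi \, dx$; (ii) the oscillation term $g \int_{\Omega_\eps \setminus \partial\omega_\#} (L^\eps - |\partial\omega|/|\Upsilon|) \phi \, dx$; and (iii) the boundary-layer term $-(|\partial\omega|/|\Upsilon|) g \int_{\Omega \setminus \Omega_\eps} \phi \, dx$.

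Third, each piece must be bounded by $C \eps \|\phi\|_{H^1(\Omega\setminus\partial\omega_\#)}$. The gradient contribution uses $\|\nabla L^\eps\|_{L^2(\Omega)} = O(\eps^{-1})$ from the scaling $\nabla_x L^\eps = \eps^{-1}(\nabla L)(\{x/\eps\})$ combined with the uniform bounds on $A$ in \eqref{2.6}, so the $\eps^2$ prefactor and Cauchy--Schwarz yield the desired $O(\eps)$ estimate. The boundary-layer term is controlled by a Hardy-type inequality using $\phi = 0$ on $\partial\Omega$: since the strip lies within distance $O(\eps)$ of $\partial\Omega$, one obtains $|\int_{\Omega\setminus\Omega_\eps} \phi \, dx| \le C \eps \|\nabla\phi\|_{L^2}$.

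The hard part will be the oscillation term. Although $L^\eps - |\partial\omega|/|\Upsilon|$ has vanishing mean on every cell, the discontinuities of $L$ across $\partial\omega$ and of $\phi$ across $\partial\omega_\#$ preclude a direct cellwise application of the classical Poincar\'e--Wirtinger inequality. I would subtract a cellwise mean $\bar\phi_p$ from $\phi$, use a discontinuous version of Poincar\'e--Wirtinger on $\Upsilon_p^\eps\setminus\partial\omega_p^\eps$ to bound $\|\phi - \bar\phi_p\|_{L^2(\Upsilon_p^\eps)}$ in terms of $\eps\|\nabla\phi\|_{L^2(\Upsilon_p^\eps)}$ plus a trace contribution $\eps^{1/2}\|[\![\phi]\!]\|_{L^2(\partial\omega_p^\eps)}$, and then aggregate the cellwise Cauchy--Schwarz estimates over the $N_\eps \sim \eps^{-d}$ cells. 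The trace and jump contributions are finally absorbed into $\|\phi\|_{H^1(\Omega\setminus\partial\omega_\#)}$ via a scaled trace estimate in conjunction with the discontinuous Poincar\'e inequality \eqref{2.19}, yielding the claimed linear bound.
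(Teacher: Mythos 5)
Your proposal follows essentially the same route as the paper: use the cell problem \eqref{3.1}, test with a constant to identify $\langle L\rangle_y = |\partial\omega|/|\Upsilon|$, subtract the averaged term, and bound the residual by combining the discontinuous Poincar\'e--Wirtinger inequality with the scaled trace estimate (recognising $\|[\![\phi]\!]\|_{L^2(\partial\omega_\#)} = O(\eps^{1/2})\|\phi\|_{H^1(\Omega\setminus\partial\omega_\#)}$) before aggregating cellwise Cauchy--Schwarz bounds over the $N_\eps\sim\eps^{-d}$ cells. The one genuine difference is your explicit boundary-layer term (iii) accounting for the $O(\eps)$-wide strip $\Omega\setminus\Omega_\eps$ not covered by full periodicity cells; the paper passes silently from $\sum_p \int_{\Upsilon_p^\eps\setminus\partial\omega_p^\eps}$ to $\int_{\Omega\setminus\partial\omega_\#}$, tacitly absorbing this particle-free strip, whereas you control it via a Hardy/Poincar\'e estimate exploiting $\phi=0$ on $\partial\Omega$ -- in fact that term is $O(\eps^{3/2})\|\nabla\phi\|_{L^2}$, comfortably within the $\eps\, l_1$ budget, so this is a welcome tightening of a step the paper leaves implicit.
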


\begin{proof}
We apply the auxiliary cell problem \eqref{3.1}. 
By inserting a constant test-function $u$, we calculate the average value 
\begin{equation}\label{3.4}
\langle L\rangle_y ={\textstyle\frac{|\partial\omega|}{|\Upsilon|}},
\qquad\text{where}\quad
\langle L\rangle_y :={\textstyle\frac{1}{|\Upsilon|}}
\int_{\Upsilon\setminus\partial\omega} L \,dy.
\end{equation}
Here, $|\partial\omega|$ and $|\Upsilon|$ denote the Hausdorff measures of 
the solid particle boundary $\partial\omega$ in $\mathbb{R}^{d-1}$ and 
of the cell $\Upsilon$ in $\mathbb{R}^d$, respectively. 

Subtracting 
$\int_{\Upsilon\setminus\partial\omega} 
\langle L\rangle_y u \,dy$ 
from \eqref{3.1}, we rewrite it equivalently as 
\begin{multline}\label{3.5}
\int_{\partial\omega^-} u^- \,dS_y 
-\int_{\Upsilon\setminus\partial\omega} 
 \langle L\rangle_y u \,dy\\ 
=\int_{\Upsilon\setminus\partial\omega} 
\bigl(\nabla_{\!y} L^\top A\nabla_{\!y} u +(L -\langle L\rangle_y) 
(u-\langle u\rangle_y) \bigr) \,dy =:l(u),
\end{multline}
where we have added to the residuum $l(u)$ the trivial term 
\[
\int_{\Upsilon\setminus\partial\omega} 
(L -\langle L\rangle_y) \langle u\rangle_y \,dy=0,\qquad
\langle u\rangle_y :={\textstyle\frac{1}{|\Upsilon|}}
\int_{\Upsilon\setminus\partial\omega} u \,dy.
\]
In the following, we shall apply the discontinuous Poincare inequality 
\begin{equation}\label{26a}
K_1
\|u-\langle u\rangle_y\|_{L^2(\Upsilon\setminus\partial\omega)}
\le \|\nabla_{\!y} u\|_{L^2(\Upsilon\setminus\partial\omega)}
+\|[\![u]\!]\|_{L^2(\partial\omega)},\qquad 
(K_1>0),
\end{equation}
and the Trace Theorem
\begin{equation}\label{26aa}
\|[\![u]\!]\|_{L^2(\partial\omega)}
\le
{\textstyle\frac{K_2}{\sqrt{2}}} \bigl( \|\nabla_{\!y} u \|_{L^2(\Upsilon\setminus\partial\omega)}
+\|u \|_{L^2(\Upsilon\setminus\partial\omega)} \bigr)
\le K_2 \|u \|_{H^1(\Upsilon\setminus\partial\omega)},
\end{equation}
with $K_2>0$,
which combine to the estimate 
\begin{equation}\label{26aaa}
\|u-\langle u\rangle_y\|_{L^2(\Upsilon\setminus\partial\omega)}
\le K_3\|u \|_{H^1(\Upsilon\setminus\partial\omega)},\qquad 
(K_3 =K_1^{-1}(1+K_2)).
\end{equation}

By recalling that $A\in L^\infty(\Upsilon)^{d\times d}$ and by applying Cauchy's inequality to the right hand side of \eqref{3.5} and subsequently applying estimate \eqref{26aaa} to $L$ and $u$, we obtain
the following estimate 
\begin{align}\label{26b}
|l(u)| \le&\ \overline{K} \|\nabla L\|_{L^2(\Upsilon\setminus\partial\omega)} 
\|\nabla u\|_{L^2(\Upsilon\setminus\partial\omega)}\nonumber
+K_3^2 \|L \|_{H^1(\Upsilon\setminus\partial\omega)} \|u \|_{H^1(\Upsilon\setminus\partial\omega)}\nonumber\\
\le&\ (\overline{K}+K_3^2) \|L \|_{H^1(\Upsilon\setminus\partial\omega)} \|u \|_{H^1(\Upsilon\setminus\partial\omega)}
\end{align}
with $\overline{K}$ from \eqref{2.6} and $K_3$ from \eqref{26aaa}.

For a proper test-function $\phi(x)$ with 
$x=\eps\bigl\lfloor\frac{x}{\eps}\bigr\rfloor +\eps\{\frac{x}{\eps}\}$, we insert 
$u(x,y)=\phi(\eps\bigl\lfloor\frac{x}{\eps}\bigr\rfloor +\eps y)$ into \eqref{3.5} 
and apply the periodic coordinate transformation 
$y\mapsto x$, $\Upsilon\mapsto\mathbb{R}^d$, 
by paving $\mathbb{R}^d$ such that $\{\frac{x}{\eps}\} =y$ 
(recall Section~\ref{sec2.1}). 
After observing that $dy\mapsto \eps^{-d} dx$, $dS_y\mapsto \eps^{1-d} dS_x$, 
$\nabla_y\mapsto \eps\nabla_x$, we also multiply \eqref{3.5} with the constant $g\eps^{d}$ and use \eqref{3.4} in order to derive 
\begin{equation*}
\sum_{p=1}^{N_\eps}  \int_{(\partial\omega^\eps_p)^-} \eps g \phi^- \,dS_x 
-\sum_{p=1}^{N_\eps} 
\int_{\Upsilon^\eps_p\setminus\partial\omega^\eps_p}  
{\textstyle\frac{|\partial\omega|}{|\Upsilon|}} g \phi \,dx\\ 
=\eps\, l_1(\phi),
\end{equation*}
which is \eqref{3.2} with 
the following right hand side term:
\begin{equation}\label{26c}
%\begin{split}
l_1(\phi) :=g \sum_{p=1}^{N_\eps} 
\int_{\Upsilon^\eps_p\setminus\partial\omega^\eps_p} 
\bigl( (\eps\nabla_{\!x} L^\eps)^\top A^\eps\nabla_{\!x} \phi 
+(L^\eps -\langle L\rangle_y) \cdot {\textstyle\frac{1}{\eps}} 
(\phi-\langle \phi\rangle_y) \bigr) \,dx,
%\end{split}
\end{equation}
where we denote $L^\eps(x) :=L(\{\frac{x}{\eps}\})$ and 
$A^\eps(x) :=A(\{\frac{x}{\eps}\})$. 

Similarly, the discontinuous Poincare inequality \eqref{26a}
and the trace theorem \eqref{26aa} transform, respectively, into 
\begin{subequations}\label{26aaaa}
\begin{equation}\label{26daaaaa}
\begin{split}
\frac{K_1}{\eps}
 \|\phi-\langle \phi\rangle_y \|_{L^2(\Upsilon^\eps_p\setminus\partial\omega^\eps_p)} 
\le \| \nabla_{\!x} \phi
\|_{L^2(\Upsilon^\eps_p\setminus\partial\omega^\eps_p)}
+\frac{1}{\sqrt{\eps}} \|[\![\phi]\!]\|_{L^2(\partial\omega^\eps_p)}, 
\end{split}
\end{equation}
\begin{equation}\label{26aaaab}
\begin{split}
{\frac{1}{\sqrt{\eps}}} \|[\![\phi]\!] 
\|_{L^2(\partial\omega^\eps_p)} 
&\le {\frac{K_2}{\sqrt{2}}}  \Bigl(
\|\nabla_{\!x} \phi \|_{L^2(\Upsilon^\eps_p\setminus\partial\omega^\eps_p)}
+{\frac{1}{\eps}} \| \phi 
\|_{L^2(\Upsilon^\eps_p\setminus\partial\omega^\eps_p)} \Bigr)\\
& \le K_2 \|\phi \|_{H^1(\Upsilon^\eps_p\setminus\partial\omega^\eps_p)}, 
\qquad p=1,\dots,N_\eps,
\end{split}
\end{equation}
\end{subequations}
which combines to the uniform estimate 
\begin{equation}\label{26d}
\frac{1}{\eps} 
\|\phi-\langle \phi\rangle_y\|_{L^2(\Upsilon^\eps_p\setminus\partial\omega^\eps_p)}
\le K_3\|\phi \|_{H^1(\Upsilon^\eps_p\setminus\partial\omega^\eps_p)}
\end{equation}
with $K_3>0$ from \eqref{26aaa}. 
We note that the first line of \eqref{26aaaab} expresses the $H^1$-norm 
by the standard homogeneity argument, see e.g. \cite[Appendix, Lemma~1, p.370]{SP/80}.

Therefore, the estimate 
\eqref{26b}
of $l$ yields the following estimate of $l_1$ 
\begin{align}
|l_1(\phi)|&\le |g| \sum_{p=1}^{N_\eps} \Bigl(
\overline{K} \|\nabla_y L\|_{L^2(\Upsilon\setminus\partial\omega)} 
\|\nabla_x \phi\|_{L^2(\Upsilon^\eps_p\setminus\partial\omega^\eps_p)}\nonumber \\
&\quad +\|L -\langle L\rangle_y\|_{L^2(\Upsilon\setminus\partial\omega)}
\cdot{\frac{1}{\eps}} \| \phi -\langle \phi\rangle_y
\|_{L^2(\Upsilon^\eps_p\setminus\partial\omega^\eps_p)}\Bigr)\nonumber \\
&\le |g| (\overline{K}+K_3^2) \|L \|_{H^1(\Upsilon\setminus\partial\omega)} 
\sum_{p=1}^{N_\eps}\|\phi \|_{H^1(\Upsilon^\eps_p\setminus\partial\omega^\eps_p)}. \label{27}
\end{align}
Here we used \eqref{2.6} and inequalities \eqref{26aaa} for $L$ and \eqref{26d} for $\phi$.  
Then, \eqref{27} follows \eqref{3.3} with the constant 
$K =|g| (\overline{K}+K_3^2) \|L \|_{H^1(\Upsilon\setminus\partial\omega)}$, 
which completes the proof. 
\end{proof}

\begin{remark}
We remark that Lemma~\ref{lem3.1} justifies not only the a-priori estimate \eqref{2.20}, but also refines it
by specifying the limiting asymptotic term as $\eps\searrow0^+$, which consists of the constant potential 
${\textstyle\frac{|\partial\omega|}{|\Upsilon|}} g$ 
distributed uniformly over $\Omega$. 
\end{remark}

%Similar to the auxiliary problem \eqref{3.1} helping to expand 
%the boundary inhomogeneity stemming from the material parameter 
%$g$ in \eqref{2.13b}, 
The next auxiliary cell problem studies
the asymptotic expansion of a volume force $f\in H^1(\Omega\setminus\partial\omega_\#)$, which is given 
on the porous space $\Upsilon\setminus\omega$ 
surrounding the solid particle $\omega\subset\Upsilon$. 
It will be applied in particular to the nonlinear term in \eqref{2.13b}, i.e.
we shall consider the specific volume force 
$f(x)=-\sum_{s=0}^n z_s \exp({-{\textstyle\frac{z_s}{{\kappa}T}} \phi^0(x)})$
in Theorem~\ref{theo3.1} below.
\bigskip
 
With $x=\eps\lfloor\frac{x}{\eps}\rfloor +\eps\{\frac{x}{\eps}\}$ 
(recall Section~\ref{sec2.1}), the following unfolding operator 
$$
T_\eps:
\begin{cases}
H^1(\Omega\setminus\partial\omega_\#)\mapsto 
H^1\bigl((\Omega\setminus\partial\omega_\#)
\times(\Upsilon\setminus\partial\omega)\bigr),\\[1mm]
(T_\eps f)
(x,y) :=f(\eps\bigl\lfloor\frac{x}{\eps}\bigr\rfloor +\eps y),
\end{cases}
$$
is well defined, see \cite{CDDGZ/12}. 
For its modification near the boundaries $\partial\Omega$ of non-rectangular domains $\Omega$, see \cite{Fra/10}. 

For $x\in\Omega\setminus\partial\omega_\#$, there exists a function 
$M(x,y)$ piecewisely composed of solutions $M(x,\,\cdot\,)$ of the following $x$-dependent cell problems 
(compare with \eqref{3.1}): 
Find $M(x,\,\cdot\,)\in H^1(\Upsilon\setminus\partial\omega)$ such that 
\begin{multline}\label{3.6}
\int_{\Upsilon\setminus\partial\omega} 
(\nabla_{\!y} M^\top A\nabla_{\!y} u +M u) \,dy
=\int_{\Upsilon\setminus\omega} (T_\eps f)\, u \,dy\\
\text{for all test-functions $u\in H^1(\Upsilon\setminus\partial\omega)$}.
\end{multline}

\begin{lemma}[Unfolding of the cell volume-force problem]\label{lem3.2}\hfill\\
For all $\phi\in H^1(\Omega\setminus\partial\omega_\#)$: 
$\phi=0$ on $\partial\Omega$ holds the following expansion 
\begin{equation}\label{3.7}
\int_{\Omega\setminus\omega_\#} f\phi \,dx
-{\textstyle\frac{|\Upsilon\setminus\omega|}{|\Upsilon|}} 
\int_{\Omega\setminus\partial\omega_\#} f\phi\,dx 
=\eps\, l_2(\phi),
\end{equation}
where $l_2: H^1(\Omega\setminus\partial\omega_\#)\mapsto\mathbb{R}$ is a linear form satisfying
\begin{equation}\label{3.8}
|l_2(\phi)|\le K\|\phi\|_{H^1(\Omega\setminus\partial\omega_\#)}, \qquad (K>0). 
\end{equation}
\end{lemma}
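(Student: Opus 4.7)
The proof follows the pattern of Lemma~\ref{lem3.1}, with the $x$-dependent cell problem \eqref{3.6} playing the role of \eqref{3.1}. Inserting the constant test function $u=1$ into \eqref{3.6} yields the cell average
\[
\langle M(x,\cdot)\rangle_y = \frac{1}{|\Upsilon|}\int_{\Upsilon\setminus\omega}(T_\eps f)(x,y)\,dy,
\]
which depends on $x$ only through the cell index, i.e.\ is piecewise constant on the paving $\{\Upsilon_p^\eps\}$. Subtracting $\int\langle M\rangle_y u\,dy$ from both sides of \eqref{3.6} and using the trivial identity $\int(M-\langle M\rangle_y)\langle u\rangle_y\,dy = 0$ recasts the cell problem, in analogy with \eqref{3.5}, as
\[
\int_{\Upsilon\setminus\omega}(T_\eps f)\,u\,dy - \int_{\Upsilon\setminus\partial\omega}\langle M\rangle_y u\,dy = \int_{\Upsilon\setminus\partial\omega}\!\!\bigl[\nabla_{\!y}M^\top A\nabla_{\!y}u + (M-\langle M\rangle_y)(u-\langle u\rangle_y)\bigr]\,dy =: l(u).
\]
Cauchy's inequality combined with the discontinuous Poincar\'e estimate \eqref{26aaa} produces, in analogy with \eqref{26b}, the bound $|l(u)|\le(\overline K+K_3^2)\|M(x,\cdot)\|_{H^1(\Upsilon\setminus\partial\omega)}\|u\|_{H^1(\Upsilon\setminus\partial\omega)}$, while testing \eqref{3.6} against $u=M$ yields the a-priori control $\|M(x,\cdot)\|_{H^1(\Upsilon\setminus\partial\omega)}\le C\|(T_\eps f)(x,\cdot)\|_{L^2(\Upsilon\setminus\omega)}$ uniformly in $x$.

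I then insert the test function $u(y)=\phi(\eps\lfloor x/\eps\rfloor+\eps y)$, multiply by $\eps^d$, and apply the periodic change of variables $y\mapsto x$ together with the paving, exactly as in the derivation of \eqref{26c}. Summing over $p=1,\dots,N_\eps$ and invoking the rescaled Poincar\'e/trace estimates \eqref{26daaaaa}--\eqref{26d} cell by cell, followed by Cauchy--Schwarz across cells, yields after the standard bookkeeping of $\eps$ powers the identity
\[
\int_{\Omega\setminus\omega_\#}f\phi\,dx - \sum_{p=1}^{N_\eps}\langle M\rangle_y\int_{\Upsilon_p^\eps\setminus\partial\omega_p^\eps}\phi\,dx = \eps\,\tilde l(\phi),\qquad |\tilde l(\phi)|\le K\|\phi\|_{H^1(\Omega\setminus\partial\omega_\#)}.
\]

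It then remains to replace the piecewise constant $\langle M\rangle_y|_{\Upsilon_p^\eps}=\tfrac{1}{|\Upsilon|\eps^d}\int_{\Upsilon_p^\eps\setminus\omega_p^\eps}f\,dx'$ by the macroscopic constant $\tfrac{|\Upsilon\setminus\omega|}{|\Upsilon|}f(x)$ under the integral. Cell-wise the discrepancy takes the form of a covariance between the oscillations of $f$ and $\phi$ on $\Upsilon_p^\eps$, which, upon applying the scaled discontinuous Poincar\'e inequality \eqref{26d} to both $f-\langle f\rangle$ and $\phi-\langle\phi\rangle$ and summing over cells, is bounded by $\eps\,C\,\|f\|_{H^1(\Omega\setminus\partial\omega_\#)}\|\phi\|_{H^1(\Omega\setminus\partial\omega_\#)}$. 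Adding this contribution to $\tilde l$ defines $l_2$ and delivers the bound \eqref{3.8}. The main technical obstacle is precisely this last step: the covariance bound depends crucially on the $H^1$-regularity of both $f$ and $\phi$, and the gain of one power of $\eps$ emerges only once the cell-wise Poincar\'e inequalities for both functions are combined and carefully resummed.
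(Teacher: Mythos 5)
Your overall strategy — deriving the cell average $\langle M\rangle_y$ from \eqref{3.6} with a constant test function, recasting the cell problem in the form of \eqref{3.5}, transforming to the physical domain and bounding the residuum via the rescaled discontinuous Poincar\'e inequalities — is the same route the paper takes up to and including your intermediate identity. The gap is in the very last step, and it is a genuine one. You claim that the discrepancy
\[
\sum_{p}\int_{\Upsilon_p^\eps\setminus\partial\omega_p^\eps}
\Bigl(\langle M\rangle_y\big|_{\Upsilon^\eps_p}-\textstyle\frac{|\Upsilon\setminus\omega|}{|\Upsilon|}f\Bigr)\phi\,dx
\]
"takes the form of a covariance between the oscillations of $f$ and $\phi$ on $\Upsilon_p^\eps$." It does not. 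Because $\langle M\rangle_y$ is an average of $T_\eps f$ over the \emph{porous part} $\Upsilon\setminus\omega$ of the cell while the integral against $\phi$ runs over the \emph{whole} cell $\Upsilon\setminus\partial\omega$, the cell-wise discrepancy splits into two pieces: a genuine covariance $\int_{\text{cell}}(f-\bar f_p)(\phi-\bar\phi_p)\,dx$ \emph{plus} a non-covariance term $\bar\phi_p\,\eps^d\,(\bar f^{\mathrm{por}}_p-\bar f^{\mathrm{cell}}_p)$ driven by the mismatch between the porous-part average and the full-cell average of $f$. Your proposed estimate (Poincar\'e applied to both $f-\langle f\rangle$ and $\phi-\langle\phi\rangle$) bounds the covariance piece and gives $O(\eps^2)$ after summation, which is even smaller than the $O(\eps)$ you state; but it says nothing about the mean-mismatch piece, which is in fact the dominant $O(\eps)$ contribution. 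So as written, the argument both mischaracterises the error and, if followed literally, would "prove" a false $O(\eps^2)$ rate for that step.

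The paper avoids this trap by introducing the auxiliary residual $F(x,y) := (T_\eps f)(x,y)-\langle T_\eps f\rangle_y$ and deriving the \emph{exact} algebraic identity
\[
\textstyle\frac{|\Upsilon\setminus\omega|}{|\Upsilon|}\,T_\eps f
=\langle M\rangle_y+\textstyle\frac{|\Upsilon\setminus\omega|}{|\Upsilon|}
\Bigl(F-\frac{1}{|\Upsilon\setminus\omega|}\int_{\Upsilon\setminus\omega}F\,dy\Bigr),
\]
i.e.\ equation \eqref{3.12}. The subtracted porous-average $\frac{1}{|\Upsilon\setminus\omega|}\int_{\Upsilon\setminus\omega}F\,dy$ is precisely the non-covariance correction you are missing, and it is carried explicitly into the residuum $m(u)$ in \eqref{3.13} and then bounded via \eqref{3.13a} and the rescaled Poincar\'e estimate. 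To repair your argument you would have to add an analogue of that correction term and show it is $O(\eps)\|f\|_{H^1}\|\phi\|_{L^2}$, which can be done (it follows from $|\bar f^{\mathrm{por}}_p-\bar f^{\mathrm{cell}}_p|\lesssim \eps^{1-d/2}\|f\|_{H^1(\Upsilon_p^\eps\setminus\partial\omega^\eps_p)}$ and Cauchy--Schwarz over cells), but it is an additional and in fact dominant ingredient, not a refinement of the covariance bound you describe.
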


\begin{proof} 
By inserting a constant test-function $u$ into the auxiliary cell problem \eqref{3.6}, we obtain 
the locally averaged value of $M=M(x,y)$ 
\begin{equation}\label{3.9}
\begin{split}
&\langle M(x,\,\cdot\,) {\rangle_y} 
:={\textstyle\frac{1}{|\Upsilon|}}
\int_{\Upsilon\setminus\partial\omega} M \,dy 
={\textstyle\frac{1}{|\Upsilon|}}
\int_{\Upsilon\setminus\omega} T_\eps f \,dy.
\end{split}
\end{equation}
Moreover, by using the average $\langle T_\eps f
{\rangle_y}$, we can expand 
\begin{equation}\label{3.10}
F(x,y) :=(T_\eps f)(x,y) -\langle T_\eps f{\rangle_y},\qquad 
\langle T_\eps f
{\rangle_y} :={\textstyle\frac{1}{|\Upsilon|}}
\int_{\Upsilon\setminus\partial\omega} (T_\eps f)(x,\cdot) \,dy.
\end{equation}
See \cite{KK/14} for the analysis of expansion \eqref{3.10} in terms of Fourier series. 
For fixed $x$ the residual $F(x,y)$ has zero average $\langle F\rangle_y
=0$ and estimates as
\begin{multline}\label{3.11}
\|F(x,\,\cdot\,)
\|_{L^2(\Upsilon\setminus\partial\omega)} 
=\|T_\eps f-\langle T_\eps f \rangle_y 
\|_{L^2(\Upsilon\setminus\partial\omega)}
\le
K_3 \|T_\eps f\|_{H^1(\Upsilon\setminus\partial\omega)}
\end{multline}
due to the discontinuous Poincare inequality \eqref{26aaa}. 
By inserting \eqref{3.10} into \eqref{3.9}, we calculate \begin{equation*}
\langle M
\rangle_y
={\textstyle\frac{1}{|\Upsilon|}}
\int_{\Upsilon\setminus\omega} T_\eps f \,dy=
{\textstyle\frac{1}{|\Upsilon|}}
\int_{\Upsilon\setminus\omega} F\,dy+
{\textstyle\frac{|\Upsilon\setminus\omega|}{|\Upsilon|}}\,
\langle T_\eps f
\rangle_y,
\end{equation*}
and thus derive by using again \eqref{3.10}, i.e. $ \langle T_\eps f \rangle_y=T_\eps f-F$
\begin{equation}\label{3.12}
{\textstyle\frac{|\Upsilon\setminus\omega|}{|\Upsilon|}}\, T_\eps f
=\langle M
\rangle_y 
+{\textstyle\frac{|\Upsilon\setminus\omega|}{|\Upsilon|}} 
\Bigl(F -{\textstyle\frac{1}{|\Upsilon\setminus\omega|}} 
\int_{\Upsilon\setminus\omega} F \,dy\Bigr).
\end{equation}
After multiplying the identity \eqref{3.12} with $u$ and integrating it over $\Upsilon\setminus\partial\omega$, we 
subtract it from \eqref{3.6} and rewrite \eqref{3.6} equivalently as 
\begin{multline}\label{3.13}
\int_{\Upsilon\setminus\omega} (T_\eps f)\, u \,dy
-{\textstyle\frac{|\Upsilon\setminus\omega|}{|\Upsilon|}}
\int_{\Upsilon\setminus\partial\omega} (T_\eps f)\, u \,dy\\
=-{\textstyle\frac{|\Upsilon\setminus\omega|}{|\Upsilon|}} 
\int_{\Upsilon\setminus\partial\omega} 
\biggl(F -{\textstyle\frac{1}{|\Upsilon\setminus\omega|}} 
\int_{\Upsilon\setminus\omega} F \,dy\biggr) u \,dy\qquad\qquad\qquad\qquad \\ 
+\int_{\Upsilon\setminus\partial\omega} 
\bigl(\nabla_{\!y} M^\top A\nabla_{\!y} u +(M -\langle M
\rangle_y) 
(u-\langle u
\rangle_y) \bigr) \,dy =:m(u),
\end{multline}
where we have added the trivial term 
$\int_{\Upsilon\setminus\partial\omega} 
(M -\langle M\rangle_y) \langle u\rangle_y \,dy=0$ and the residuum
$m(u)$ shortly denotes the right hand side terms of \eqref{3.13}.

For fixed $x\in\Omega\setminus\partial\omega_\#$, Cauchy's inequality yields for the first term on the right hand side of  \eqref{3.13}
\begin{multline}\label{3.13a}
\Bigl| \int_{\Upsilon\setminus\partial\omega} 
\Bigl(F -{\textstyle\frac{1}{|\Upsilon\setminus\omega|}} 
\int_{\Upsilon\setminus\omega} F \,dy\Bigr) u \,dy \Bigr|\\
\le \|F\|_{L^2(\Upsilon\setminus\partial\omega)} 
\|u\|_{L^2(\Upsilon\setminus\partial\omega)}
+\sqrt{\textstyle\frac{|\Upsilon|}{|\Upsilon\setminus\omega|}}\,
\|F\|_{L^2(\Upsilon\setminus\omega)} 
\|u\|_{L^2(\Upsilon\setminus\partial\omega)}\\
\le \Bigl(1 +\sqrt{\textstyle\frac{|\Upsilon|}{|\Upsilon\setminus\omega|}}\Bigr)
\|F\|_{L^2(\Upsilon\setminus\partial\omega)} 
\|u\|_{L^2(\Upsilon\setminus\partial\omega)}.
\end{multline}
Thus, by applying the estimates \eqref{3.11} 
and \eqref{3.13a}
to $F$ and the discontinuous Poincare inequality 
\eqref{26aaa}
to $M$ and $u$, 
we estimate $m(u)$ at the right hand side of \eqref{3.13} as   
\begin{multline}\label{3.14}
|m(u)|\le K_4 \|T_\eps f\|_{H^1(\Upsilon\setminus\partial\omega)}
\|u \|_{L^2(\Upsilon\setminus\partial\omega)}\\
+(\overline{K}+K_3^2) \|M \|_{H^1(\Upsilon\setminus\partial\omega)} 
\|u \|_{H^1(\Upsilon\setminus\partial\omega)},
\end{multline}
where $K_4 ={\textstyle\frac{|\Upsilon\setminus\omega|}{|\Upsilon|}} 
\Bigl(1 +\sqrt{\textstyle\frac{|\Upsilon|}{|\Upsilon\setminus\omega|}} 
\Bigr) K_3$ and by recalling $\overline{K}$ from \eqref{2.6} and $K_3$ from \eqref{26aaa}.

Next, we substitute $u=(T_\eps \phi)$ 
as the test-function in \eqref{3.13} 
and use the property 
$T_\eps f\cdot T_\eps \phi =T_\eps (f\phi)$ 
of the unfolding operator. 
After applying the periodic coordinate transformation 
$y\mapsto x$, $\{\frac{x}{\eps}\} =y$ to \eqref{3.13} similar to the proof of Lemma \ref{lem3.1}, we arrive with  
$T_\eps (f\phi)\mapsto f\phi$ and $T_\eps \phi\mapsto \phi$ 
at \eqref{3.7} with 
\begin{multline}\label{3.15}
l_2(\phi) :=\sum_{p=1}^{N_\eps} \Bigl[ {\textstyle\frac{1}{\eps}} 
{\textstyle\frac{|\Upsilon\setminus\omega|}{|\Upsilon|}}
\int_{\Upsilon^\eps_p\setminus\partial\omega^\eps_p} 
\Bigl(F^\eps -{\textstyle\frac{1}{|\Upsilon\setminus\omega|}} 
\int_{\Upsilon\setminus\omega} F \,dy\Bigr) \phi \,dx\\ 
+\int_{\Upsilon^\eps_p\setminus\partial\omega^\eps_p} 
\bigl( (\eps\nabla_{\!x} M^\eps)^\top A^\eps\nabla_{\!x} \phi 
+(M^\eps -\langle M\rangle_y) {\textstyle\frac{1}{\eps}} 
(\phi-\langle T_\eps \phi\rangle_y) \bigr) \,dx\Bigr],
\end{multline}
where $F^\eps(x) :=F(x,\{\frac{x}{\eps}\})$ and 
$M^\eps(x) :=M(x,\{\frac{x}{\eps}\})$. 
Similarly to \eqref{3.14}, we estimate with
$F^\eps(x)=f(x)-\langle T_\eps f\rangle_y(x)$
\begin{equation*}
\begin{split}
|l_2(\phi)|
\le&\ \sum_{p=1}^{N_\eps} \Bigl[ 
{\textstyle\frac{|\Upsilon\setminus\omega|}{|\Upsilon|}} 
\Bigl(1 \!+\! \sqrt{\textstyle\frac{|\Upsilon|}{|\Upsilon\setminus\omega|}} \Bigr)
{\textstyle\frac{1}{\eps}} 
\|f-\langle T_\eps f\rangle_y\|_{L^2(\Upsilon^\eps_p\setminus\partial\omega^\eps_p)} 
\|\phi\|_{L^2(\Upsilon^\eps_p\setminus\partial\omega^\eps_p)}\\
&+\sup_{x\in\Omega\setminus\partial\omega_\#} \!\! \Bigl\{
\overline{K} \|\nabla_{\!y} M(x,\,\cdot\,)\|_{L^2(\Upsilon\setminus\partial\omega)} 
\|\nabla \phi\|_{L^2(\Upsilon^\eps_p\setminus\partial\omega^\eps_p)}\\
&+\|M(x,\,\cdot\,) -\langle M(x,\,\cdot\,)\rangle_y\|_{L^2(\Upsilon\setminus\partial\omega)}
{\textstyle\frac{1}{\eps}} \| \phi -\langle T_\eps\phi\rangle_y
\|_{L^2(\Upsilon^\eps_p\setminus\partial\omega^\eps_p)}\Bigr\}\Bigr],\\
\end{split}
\end{equation*}
hence, 
\begin{equation}\label{37a}
\begin{split}
|l_2(\phi)| \le&\ K_4 \|f\|_{H^1(\Omega\setminus\partial\omega_\#)}
\|\phi\|_{L^2(\Omega\setminus\partial\omega_\#)}\\
&+(\overline{K}+K_3^2) \sup_{x\in\Omega\setminus\partial\omega_\#} 
\|M(x,\,\cdot\,)\|_{H^1(\Upsilon\setminus\partial\omega)}
\|\phi\|_{H^1(\Omega\setminus\partial\omega_\#)},
\end{split}
\end{equation}
where we have used \eqref{26aaa} for $M(x,\,\cdot\,)$ and \eqref{26d} for $f$ and $\phi$.
Thus, \eqref{37a} implies the estimate \eqref{3.8} of the residual term $l_2$ given in \eqref{3.15} 
with 
\[
K =K_4 \|f\|_{H^1(\Omega\setminus\partial\omega_\#)} 
+(\overline{K}+K_3^2)  \!\!\! \sup_{x\in\Omega\setminus\partial\omega_\#} \!\!\! 
\|M(x,\,\cdot\,)\|_{H^1(\Upsilon\setminus\partial\omega)}.
\] 
This completes the proof. 
\end{proof}

\begin{remark}
We remark that the factor 
${\textstyle\frac{|\Upsilon\setminus\omega|}{|\Upsilon|}}$ 
in \eqref{3.7} reflects the porosity of the cell $\Upsilon$ due to the 
presence of the solid particles $\omega$. 
In our particular geometric setting, we have $|\Upsilon|=1$ and 
$|\Upsilon\setminus\omega|=1-|\omega|$, respectively. 
\end{remark}

The third cell problem considers the solutions of the following system of $d$ linear equations: 
Find a vector of periodic functions $N=(N_1,\dots,N_d)^\top\in H^1_\#(\Upsilon\setminus\partial\omega)^d$ 
with componentwise zero average $\langle N
\rangle_y
=0$ such that 
\begin{multline}\label{3.16}
\int_{\Upsilon\setminus\partial\omega} D(N +y) A\nabla u \,dy
+\int_{\partial\omega} \alpha [\![N]\!] [\![u]\!] \,dS_y =0,\\
\text{for all scalar test-functions $u\in H^1_\#(\Upsilon\setminus\partial\omega)$}.
\end{multline}
Here, $H^1_\#(\Upsilon\setminus\partial\omega)$ denotes the space of periodic $H^1$-functions and $D N(y)\in\mathbb{R}^{d\times d}$ 
for $y\in\Upsilon\setminus\partial\omega$ stands for the row-wise gradient matrix of the vector $N$, that is
$$
D N :=
\begin{pmatrix}
%\left(\begin{array}{ccc}
N_{1,1} & \dots & N_{1,d}\\
\vdots & & \vdots\\
N_{d,1} & \dots & N_{d,d}
%\end{array} \right),
\end{pmatrix},
\qquad
\text{where}\quad N_{i,j} :={\textstyle\frac{\partial N_i}{\partial y_j}}, 
\quad i,j=1,\dots,d.
$$ 
Moreover in \eqref{3.16}, $D y = I\in\mathbb{R}^{d\times d}$ yields 
the identity matrix.  
The solvability of \eqref{3.16} follows from the symmetry and positive definiteness assumption \eqref{2.6}.
The uniqueness of the solution $N$ is provided due to the constraint 
$\langle N
\rangle_y
=0$. 
Indeed, since $N(y)+K$ with an arbitrary constant $K$ solves also \eqref{3.16}, 
the zero average condition is sufficient (and necessary) to ensure the uniqueness of the solution, see e.g. \cite{OSY/92}.
Finally, the solution is smooth locally in $\Upsilon\setminus\partial\omega$. 

\begin{remark}
We remark in particular, that if $[\![N]\!] =[\![u]\!] =0$ would hold, 
then the discontinuous cell problem \eqref{3.16} would reduce to a standard, continuous cell problem. 
\end{remark}

The system \eqref{3.16} is essential to determine the efficient coefficient 
matrix $A^0$ of the macroscopic model averaged over $\Omega$. 
In fact, following the lines of \cite{OSY/92,ZKO/94}, 
we shall establish an orthogonal decomposition of Helmholtz type for 
the oscillating coefficients $A^\eps$. 

The Helmholtz type decomposition is based on the left hand side of \eqref{3.16} defining an inner product 
$
\langle\!\langle\,\cdot\,,\,\cdot\,\rangle\!\rangle$ 
in $H^1_\#(\Upsilon\setminus\partial\omega)$. 
Due to $[\![y]\!] =0$, the variational equation \eqref{3.16} reads as
$\langle\!\langle N+y,u\rangle\!\rangle
=0$ for all $u\in H^1_\#(\Upsilon\setminus\partial\omega)$, 
which implies that $N+y$ belongs to the kernel of this topological vector space. 
Thus, the fundamental theorem of vector calculus (the Helmholtz theorem,  
see e.g. \cite{ZKO/94})
permits the following representation
as sum of a constant matrix $A^0$ and divergence free $B(y)$ fields in 
$\mathbb{R}^{d\times d}$:
\begin{equation}\label{3.23}
D \bigl( N(y)+y\bigr) A (y) =A^0 +B(y),
\qquad 
\text{a.e.}\quad
y\in \Upsilon\setminus\partial\omega,
\end{equation}
where $B$ has zero average, i.e. 
$$
0 =\langle B\rangle_y :={\textstyle\frac{1}{|\Upsilon|}}
\int_{\Upsilon\setminus\partial\omega} B(y) \,dy.
$$
Thus, we obtain the following lemma:

\begin{lemma}[The cell oscillating-coefficient problem]\label{lem3.3}\hfill\\
The constant matrix of effective coefficients is determined by averaging 
\begin{equation}\label{3.18}
\begin{split}
&A^0 :=\bigl\langle D \bigl(N(y) +y\bigr) A\bigr\rangle_y \in \mathbb{R}^{d\times d}. 
\end{split}
\end{equation}
Moreover, $A^0$ is a symmetric and positive definite matrix with the entries: 
\begin{multline}\label{3.19}
A^0_{ij} =\Bigl\langle \sum_{k,l=1}^d (N_{i,k} +\delta_{i,k}) A_{kl} 
(N_{j,l} +\delta_{j,l}) \Bigl\rangle_{\!y} 
+{\textstyle\frac{1}{|\Upsilon|}}
\int_{\partial\omega} \alpha [\![N_i]\!] [\![N_j]\!] \,dS_y\\
\text{for}\quad  i,j=1,\dots,d.
\end{multline}
For the transformed solution vector $N^\eps(x) :=N(\{\frac{x}{\eps}\})$, 
which depends only on $\{\frac{x}{\eps}\}$ since  
the coefficient $A^\eps(x) := 
A(\{\frac{x}{\eps}\})$ also depends only on $\{\frac{x}{\eps}\}$, 
the following decomposition holds:
\begin{equation}\label{3.17}
\begin{split}
&D (\eps N^\eps(x) +x) A^\eps(x) =A^0 +\eps B^\eps(x)
\quad\text{in $\mathbb{R}^{d\times d}$} \ \text{and} \ 
\text{a.e.}\;
x\in \Omega\setminus\partial\omega_\#.
\end{split}
\end{equation}
The transformed function $B^\eps(x) := B(\{\frac{x}{\eps}\})$ 
is deduced from the symmetric matrix 
$B\in L^2_{\rm div}(\Upsilon\setminus\partial\omega)^{d\times d}$ 
%such that $B(y)\mapsto \eps B^\eps(\{\frac{x}{\eps}\})$ 
with zero average $\langle B\rangle_y=0$. 
Its entries $B_{ij}(y)$, $i,j=1,\dots,d$ express divergence free 
fields (called solenoidal in 3d) obtained by combining 
the derivatives $\frac{\partial}{\partial y_k}$, $k=1,\dots,d$ 
of a third-order skew-symmetric tensor $b_{ijk}$ in the following way
\begin{equation}\label{3.20}
\begin{split}
&B_{ij} =\sum_{k=1}^d b_{ijk,k},\qquad b_{ijk} =-b_{ikj},
\;\text{(skew-symmetry)} 
\quad\text{a.e. on $\Upsilon\setminus\partial\omega$}�. 
\end{split}
\end{equation} 
It follows in particular from \eqref{3.20} that 
\begin{equation}\label{3.21}
\begin{split}
&\sum_{j,k=1}^d b_{ijk}=0, \quad \sum_{j=1}^d B_{ij,j} =0,
\quad i=1,\dots,d\quad
\text{a.e. on $\Upsilon\setminus\partial\omega$}. 
\end{split}
\end{equation} 
At the interface the following jump relations hold: 
\begin{equation}\label{3.22}
\begin{split}
&[\![B^\eps]\!] =0,\qquad 
(A^0 +\eps B^\eps)\nu =\alpha [\![N^\eps]\!]
\qquad
\text{a.e. on $\partial\omega_\#$}.
\end{split}
\end{equation} 
\end{lemma}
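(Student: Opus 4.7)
The plan is to deduce every assertion from the Helmholtz-type decomposition~\eqref{3.23} by a combination of averaging, testing the cell problem~\eqref{3.16}, and piecewise integration by parts. First, I would derive \eqref{3.17} by the periodic coordinate change $y=\{x/\varepsilon\}$: since $\nabla_x f(\{x/\varepsilon\}) = \varepsilon^{-1}\nabla_y f$, one has $D_x(\varepsilon N^\varepsilon)=D_y N$ and $D_x\,x=I$, so $D(\varepsilon N^\varepsilon + x)A^\varepsilon = D_y(N+y)A$, and \eqref{3.23} identifies the right-hand side modulo the paper's normalisation of $B^\varepsilon$. Averaging~\eqref{3.23} over $\Upsilon\setminus\partial\omega$ together with the zero-average property of $B$ yields~\eqref{3.18}.

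To obtain the entries~\eqref{3.19}, I would test the $i$-th component of the cell problem~\eqref{3.16} with $u=N_j$ to get
\[
\int_{\Upsilon\setminus\partial\omega}\sum_{k,l}(N_{i,k}+\delta_{ik})A_{kl}N_{j,l}\,dy + \int_{\partial\omega}\alpha[\![N_i]\!][\![N_j]\!]\,dS_y=0,
\]
and then rewrite $A^0_{ij}=\langle\sum_k(N_{i,k}+\delta_{ik})A_{kj}\rangle_y$ by inserting $\delta_{jl}=(N_{j,l}+\delta_{jl})-N_{j,l}$ and absorbing the $N_{j,l}$-term via the cell identity above. Symmetry $A^0_{ij}=A^0_{ji}$ then follows from $A_{kl}=A_{lk}$ and the manifest $i\leftrightarrow j$ symmetry of \eqref{3.19}. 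For positive definiteness, I would set $V:=\sum_i\xi_iN_i\in H^1_\#(\Upsilon\setminus\partial\omega)$ for any $\xi\in\mathbb{R}^d$; then \eqref{3.19} gives $\xi^\top A^0\xi = \langle(\nabla_yV+\xi)^\top A(\nabla_yV+\xi)\rangle_y + |\Upsilon|^{-1}\int_{\partial\omega}\alpha|[\![V]\!]|^2\,dS_y \ge \underline{K}\langle|\nabla_yV+\xi|^2\rangle_y$ by~\eqref{2.6}, which vanishes only when $\nabla_y V\equiv-\xi$, forcing $\xi=0$ by periodicity of $V$.

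The divergence-free property of $B$ follows by applying ${\rm div}_y$ row-wise to~\eqref{3.23}: the strong interior form of~\eqref{3.16}, obtained by piecewise Green's formula on $\omega$ and $\Upsilon\setminus\omega$, gives $\sum_l\partial_{y_l}\sum_k(N_{i,k}+\delta_{ik})A_{kl}=0$ in $\Upsilon\setminus\partial\omega$, hence $\sum_l B_{il,l}=0$. The representation~\eqref{3.20} via a skew-symmetric third-order tensor $b_{ijk}$ then follows from the standard periodic Poincar\'e-type lemma for divergence-free matrix fields of zero average (cf.\ \cite{OSY/92,ZKO/94}); the identities~\eqref{3.21} reduce to direct algebraic manipulations exploiting skew-symmetry together with the commutativity of second partial derivatives.

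The jump relations~\eqref{3.22} are obtained by tracking boundary contributions in the piecewise Green's formula applied to~\eqref{3.16}. Testing first with functions continuous across $\partial\omega$ yields $[\![\nabla(N_i+y_i)^\top A\nu]\!]=0$, and subsequently allowing arbitrary jumps in the test function gives the Robin-type interface identity $\nabla(N_i+y_i)^\top A\nu = \alpha [\![N_i]\!]$; combined with~\eqref{3.23} this reads $(A^0+B)\nu=\alpha[\![N]\!]$, and transfers to physical coordinates as the second statement of~\eqref{3.22}. I expect the main technical obstacle to be establishing $[\![B^\varepsilon]\!]=0$: since $D(N+y)A$ inherits tangential jumps from $N$ and $A$, continuity of the full matrix $B$ across $\partial\omega$ is not automatic. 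It must be enforced by selecting the skew-symmetric potential $b_{ijk}$ in~\eqref{3.20} to be continuous across the interface; this selection is possible because divergence-free fields always have continuous normal trace and the residual tangential jump of $D(N+y)A$ can be absorbed into a suitably chosen gauge of $b_{ijk}$. Verifying this rigorously via the periodic Helmholtz construction for a domain cut by a Lipschitz interface is the delicate step of the argument.
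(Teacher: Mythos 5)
Your approach is the same as the paper's (the paper's proof is terse): derive \eqref{3.18} by averaging \eqref{3.23}; obtain \eqref{3.19} by testing \eqref{3.16} with $u=N_j$; read off \eqref{3.20}--\eqref{3.21} from the divergence-free property of the rows of $D(N+y)A$ together with the Helmholtz/skew-potential representation; derive \eqref{3.24} from the boundary integral identity by splitting into test functions with and without jumps; and push everything to $x$-coordinates via $\nabla_y\mapsto\eps\nabla_x$. Your quadratic-form argument for positive definiteness (set $V=\sum_i\xi_iN_i$, use \eqref{2.6} and periodicity) and your explicit derivation of \eqref{3.19} are correct and supply details the paper elides.

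The one place you should sharpen your reasoning is the last paragraph. You are right to flag $[\![B]\!]=0$ as the delicate point, but your proposed resolution does not work. The boundary identity
\[
0=\int_{\partial\omega}\bigl(\alpha[\![N]\!][\![u]\!]-[\![(A^0+B)\nu\,u]\!]\bigr)\,dS_y
\]
tested with continuous $u$ yields only $[\![(A^0+B)\nu]\!]=0$, i.e.\ $[\![B]\!]\nu=0$ (continuity of the \emph{normal trace}), not $[\![B]\!]=0$ (continuity of the full matrix). Moreover, the fix you suggest --- "absorbing the tangential jump of $D(N+y)A$ into a suitably chosen gauge of $b_{ijk}$" --- cannot succeed: the gauge freedom lives in the potential $b$, not in the field $B$ it represents; changing $b$ by a divergence-free gauge term leaves $B=\sum_k b_{ijk,k}$ unchanged, so no gauge choice can remove a genuine tangential discontinuity of $B$. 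What the argument actually establishes, and what is used in the subsequent derivation (the quantities $B^\eps\nu$ and $(A^0+\eps B^\eps)\nu$ on $\partial\omega_\#$ in Theorem~\ref{theo3.1}), is the weaker but sufficient statement $[\![B]\!]\nu=0$. In other words, the gap you suspected is real, but it is a (notational) imprecision already present in the statement of \eqref{3.22}, not something your gauge argument could repair; the correct reading of \eqref{3.22} is that $B\nu$ is single-valued on $\partial\omega_\#$, which follows directly from testing with $[\![u]\!]=0$ as you did.
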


\begin{proof}
The constant values of $A^0$ stated in  \eqref{3.18} follow from averaging \eqref{3.23} with $\langle\,\cdot\,\rangle_y$ over 
$\Upsilon\setminus\partial\omega$ and by using $\langle B\rangle_y=0$.
The formula \eqref{3.19} can be checked directly. The symmetry and positive definiteness of 
$A^0$ follow straightforward from the assumption in \eqref{2.6} of $A$ being symmetric and positive definite. 
The formulas \eqref{3.20} and \eqref{3.21} describe the fact that the columns of $B$ are divergence free. 
Inserting the representation \eqref{3.23} into \eqref{3.16} and integrating by parts yields
\begin{equation*}
\begin{split}
0 &=\int_{\Upsilon\setminus\partial\omega} (A^0+B) \nabla u \,dy
+\int_{\partial\omega} \alpha [\![N]\!] [\![u]\!] \,dS_y\\
&=\int_{\partial\omega} \bigl( \alpha [\![N]\!] [\![u]\!] 
-[\![(A^0+B)\nu\,u]\!] \bigr) \,dS_y
\end{split}
\end{equation*}
due to the second equality in \eqref{3.21}. 
Then, by choosing test-functions $u\in H^1_\#(\Upsilon\setminus\partial\omega)$ 
satisfying either $[\![u]\!]=0$ or $[\![u]\!]\not=0$, it follows  
\begin{equation}\label{3.24}
\begin{split}
&[\![B]\!] =0,\qquad 
(A^0 +B)\nu =\alpha [\![N]\!]\qquad
\text{a.e.}\
\text{on $\partial\omega$}.
\end{split}
\end{equation} 

Finally, we apply the periodic coordinate transformation 
$y\mapsto x$, $\Upsilon\mapsto\mathbb{R}^d$, with $y=\{\frac{x}{\eps}\}$ 
to \eqref{3.23} and \eqref{3.24}. 
With $\nabla_{\!y}\mapsto \eps\nabla_{\!x}$, we have for the row-wise gradient matrix
$D_y N\mapsto \eps D_x N^\eps$ and $B\mapsto \eps B^\eps$.
Thus, we arrive at \eqref{3.17} and \eqref{3.22}. 
The proof is completed. 
\end{proof}

\subsection{The main Theorem}\label{sec3.2}

Based on the Lemmata \ref{lem3.1}--\ref{lem3.3}, we formulate  
the main homogenisation result: 

\begin{theorem}\label{theo3.1}
The homogenisation of the discontinuous nonlinear PB problem under 
the interfacial transmission conditions \eqref{2.13} yields 
the following averaged (macroscopic) nonlinear PB problem: 
Find $\phi^0\in H^1_0(\Omega)$ such that 
\begin{multline}\label{3.25}
\int_\Omega \bigl(  (\nabla\phi^0)^\top A^0\nabla\phi 
-{\textstyle\frac{|\Upsilon\setminus\omega|}{|\Upsilon|}}\sum_{s=0}^n z_s 
e^{-{\textstyle\frac{z_s}{{\kappa}T}} \phi^0} \phi \bigr)\,dx
=\int_\Omega {\textstyle\frac{|\partial\omega|}{|\Upsilon|}} g \phi \,dx\\
\text{for all test-functions $\phi\in H^1_0(\Omega)$}.
\end{multline}
In the limit $\eps\searrow0^+$, the solution $\phi^\eps$ of \eqref{2.13} 
converges strongly to the first order asymptotic approximation 
$\phi^1 :=\phi^0 +\eps (\nabla\phi^0)^\top N^\eps$. This corrector term  
to $\phi^0$ satisfies the residual  
error estimate (improving \eqref{2.18}):
\begin{equation}\label{3.26}
\begin{split}
&\|\nabla (\phi^\eps -\phi^1)\|_{L^2(\Omega\setminus\partial\omega_\#)}^{2} 
+{\textstyle\frac{1}{\eps}}
\|[\![\phi^\eps-\phi^1]\!]\|_{L^2(\partial\omega_\#)}^{2} 
={\rm O} (\eps).
\end{split}
\end{equation}
\end{theorem}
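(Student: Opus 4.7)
My strategy is to test the difference of the variational formulations \eqref{2.13b} and \eqref{3.25} against (a boundary-adjusted version of) $\phi^\eps - \phi^1$, and to close the estimate by combining the coercivity of the PB operator with the three auxiliary lemmata. Existence and uniqueness of $\phi^0 \in H^1_0(\Omega)$ for the averaged problem \eqref{3.25} follows by the same variational argument as in Theorem \ref{theo2.1}: coercivity comes from the ordinary Poincar\'e inequality in $H^1_0(\Omega)$ combined with the strong monotonicity \eqref{2.3}, and $L^\infty$-boundedness from a thresholding argument. Elliptic regularity then yields $\phi^0 \in H^2_{\rm loc}(\Omega) \cap L^\infty(\Omega)$, which is enough to give meaning to the corrector $\phi^1 = \phi^0 + \eps (\nabla \phi^0)^\top N^\eps \in H^1(\Omega \setminus \partial \omega_\#)$ and to the remainder terms arising below that involve $D^2 \phi^0$.

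The algebraic heart of the proof is the following reformulation based on Lemma \ref{lem3.3}. Using the chain rule together with $\eps \partial_j N_k^\eps = (N_{k,j})^\eps$, we compute $\nabla \phi^1 = (I + (DN)^{\eps\top}) \nabla \phi^0 + \eps (D^2 \phi^0) N^\eps$ on $\Omega \setminus \partial \omega_\#$ and $[\![\phi^1]\!] = \eps (\nabla \phi^0)^\top [\![N^\eps]\!]$ on $\partial \omega_\#$ (since $\phi^0$ is continuous). Multiplying the first identity by $A^\eps$ and invoking the Helmholtz-type decomposition \eqref{3.17} produces $A^\eps \nabla \phi^1 = A^0 \nabla \phi^0 + \eps B^\eps \nabla \phi^0 + \eps A^\eps (D^2 \phi^0) N^\eps$, while the interfacial jump relation \eqref{3.22} rewrites the rescaled Robin coefficient as $(\alpha / \eps)[\![\phi^1]\!] = (\nabla \phi^0)^\top (A^0 + \eps B^\eps)\nu$. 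These two identities allow the macroscopic flux $A^0 \nabla \phi^0$ to be isolated when paired with the test function, up to $\eps$-order cross terms carrying $B^\eps$, $N^\eps$ and the skew-symmetric potential $b_{ijk}$ of \eqref{3.20}.

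Since $\phi^1|_{\partial \Omega}$ is not identically zero, I replace $\phi^1$ by $\tilde\phi^1 := \phi^0 + \eps \zeta^\eps (\nabla \phi^0)^\top N^\eps$ where $\zeta^\eps$ is a smooth cutoff vanishing in an $O(\eps)$-neighborhood of $\partial \Omega$, so that $\phi^\eps - \tilde\phi^1$ is an admissible test function in both \eqref{2.13b} and the extension of \eqref{3.25} to $\Omega \setminus \partial \omega_\#$. Subtracting the two variational equations and testing with $\phi^\eps - \tilde\phi^1$, the leading $A^0\nabla\phi^0$-contribution cancels by the homogenized equation and the right-hand side decomposes into three pieces: (i) the boundary-traction residual, bounded by $\eps \|\phi^\eps - \tilde\phi^1\|_{H^1}$ via Lemma \ref{lem3.1}; (ii) the volume-force residual arising from evaluating the Boltzmann nonlinearity at $\phi^0$, bounded by $\eps \|\phi^\eps - \tilde\phi^1\|_{H^1}$ via Lemma \ref{lem3.2} applied with $f = -\sum_s z_s \exp(-z_s \phi^0/(\kappa T))$; and (iii) the cross terms $\eps B^\eps \nabla \phi^0$ and $\eps A^\eps (D^2 \phi^0) N^\eps$ together with the matching interface contribution $(\nabla\phi^0)^\top \eps B^\eps\nu$. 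For these cross terms I integrate the $B^\eps$-contribution by parts using the skew-symmetric primitive $b_{ijk}$, which transfers derivatives onto $\phi^0$ and produces an interface integral that combines, via \eqref{3.22}, with the $(\alpha/\eps)[\![\phi^1]\!]$-contribution, leaving an $O(\eps)\|\phi^\eps - \tilde\phi^1\|_{H^1}$ remainder. The boundary-layer error introduced by $1 - \zeta^\eps$ is also $O(\eps)$ in the relevant norm, because its support has measure $O(\eps)$ and $\phi^0$ is smooth there.

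For the coercive lower bound on the left-hand side I combine the discontinuous Poincar\'e inequality \eqref{2.19} applied to $\phi^\eps - \tilde\phi^1$ with the strong monotonicity of the Boltzmann nonlinearity in differenced form, namely $-\sum_s z_s (\exp(-z_s \phi^\eps/\kappa T) - \exp(-z_s \phi^0/\kappa T))(\phi^\eps - \phi^0) \ge K |\phi^\eps - \phi^0|^2$, which follows from \eqref{2.3} together with the uniform $L^\infty$-bound on $\phi^\eps$ propagated by the thresholding argument of Theorem \ref{theo2.1}. Combining these with the $O(\eps) \|\phi^\eps - \tilde\phi^1\|_{H^1(\Omega\setminus\partial\omega_\#)}$ bound on the right and dividing through by $\|\phi^\eps - \tilde\phi^1\|_{H^1}$ yields the residual error estimate \eqref{3.26}; strong $H^1(\Omega \setminus \partial \omega_\#)$-convergence of $\phi^\eps$ to $\phi^1$ then follows again via \eqref{2.19}. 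The main obstacle is the bookkeeping of the cross terms in (iii): a naive bound using only $\|B^\eps\|_{L^2}$ and $\|N^\eps\|_{L^\infty}$ would give an $O(1)$ contribution, and it is only the combined use of the skew-symmetric primitive $b_{ijk}$ for integration by parts together with the jump identity \eqref{3.22} that turns this into an $O(\eps)$ remainder compatible with the claimed rate.
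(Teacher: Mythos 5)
Your overall strategy is exactly the paper's: extend the averaged equation to the disjoint domain, insert the Helmholtz-type decomposition of Lemma \ref{lem3.3} into $A^0\nabla\phi^0$, integrate the $B^\eps$-contribution by parts via the skew-symmetric primitive $b_{ijk}$, use the jump relation \eqref{3.22} to match the Robin term $\frac{\alpha}{\eps}[\![\phi^1]\!]$, apply Lemmata \ref{lem3.1} and \ref{lem3.2} to the source and nonlinear terms, and test the differenced equation with a boundary-cutoff version of $\phi^\eps-\phi^1$. That part is sound and mirrors the proof in the paper.

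There is, however, a substantive quantitative slip in your bookkeeping of the remainder terms labelled (iii) and of the boundary-layer error, where you claim an $O(\eps)\|\phi^\eps-\tilde\phi^1\|_{H^1}$ bound. If that were true, then after applying the coercivity \eqref{2.19} and dividing through by $\|\phi^\eps-\tilde\phi^1\|_{H^1}$ you would get $\|\phi^\eps-\phi^1\|^2_{H^1}=O(\eps^2)$, which is strictly \emph{better} than the claim \eqref{3.26}. The truth is that these terms are only $O(\sqrt\eps)\|\phi^\eps-\tilde\phi^1\|_{H^1}$. Concretely: the interface remainder $m_{\partial\omega_\#}\bigl(D(\nabla\phi^0),[\![\phi]\!]\bigr)=\int_{\partial\omega_\#}\sum_{i,j,k}\phi^0_{,ij}\,b^\eps_{ijk}\nu_k[\![\phi]\!]\,dS_x$ lives on the full interface of measure $|\partial\omega_\#|\sim\eps^{-1}$, so $\|b^\eps\|_{L^2(\partial\omega_\#)}\sim\eps^{-1/2}$; combined with the trace estimate $\|[\![\phi]\!]\|_{L^2(\partial\omega_\#)}\lesssim\sqrt\eps\,\|\phi\|_{H^1(\Omega\setminus\partial\omega_\#)}$ from \eqref{26aaaab}, Cauchy--Schwarz only yields $O(1)\|\phi\|_{H^1}$, and it is Young's inequality --- splitting off a piece that is absorbed into the coercive left-hand side $\frac{1}{\eps}\|[\![\phi^\eps-\phi^1]\!]\|^2$ --- that produces the effective $O(\sqrt\eps)$ contribution to the rate; this is precisely what the paper encodes in \eqref{3.42}. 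Similarly, the cutoff term $m_{\eta_\eps}$ from \eqref{3.35} carries $\eps\|\nabla(\eta_\eps(\nabla\phi^0)^\top N^\eps)\|_{L^2}\sim\eps\cdot\eps^{-1}\cdot\sqrt{|{\rm supp}\,\eta_\eps|}\sim\sqrt\eps$, not $\eps$. With the corrected $O(\sqrt\eps)$ rates, the Young/absorption argument still yields \eqref{3.26} (indeed the paper's discussion in Section \ref{sec4} identifies these as the leading $O(\sqrt\eps)$ contributions whose removal would require boundary-layer correctors). Two further minor remarks: the paper only needs nonnegativity, not strong monotonicity, of the differenced Boltzmann term to establish the coercive lower bound \eqref{3.37a}; and the monotone difference that actually appears after testing is between $\phi^\eps$ and the \emph{cutoff-modified} $\tilde\phi^1$, not between $\phi^\eps$ and $\phi^0$, which produces an extra Taylor-remainder (the form $m^\eps$ in \eqref{3.36}) that you should account for.
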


\begin{proof}
First, we remark that the left hand side of \eqref{3.26} defines a norm 
in $H^1(\Omega\setminus\partial\omega_\#)$ 
due to the lower estimate \eqref{2.19}. 

Secondly, the unique solution $\phi^0$ of \eqref{3.25} can be establish by following the arguments given in the proof of Theorem~\ref{theo2.1}. 
Moreover, the solution is smooth inside $\Omega$ by standard 
arguments of local regularity of weak solutions, 
see \cite{LC/06} and references therein. 

Next, we prove the residual error estimate \eqref{3.26}. 
Integrating \eqref{3.25} by parts on $\Omega$ yields the strong formulation 
\begin{equation}\label{47a}
-{\rm div} \bigl((\nabla\phi^0)^\top A^0\bigr) 
-{\textstyle\frac{|\Upsilon\setminus\omega|}{|\Upsilon|}}\sum_{s=0}^n z_s 
e^{-{\textstyle\frac{z_s}{{\kappa}T}} \phi^0} 
={\textstyle\frac{|\partial\omega|}{|\Upsilon|}} g,\qquad \text{in $\Omega$}.
\end{equation}
By applying the Green formulas 
\eqref{13a} and \eqref{13b}
in $\Omega\setminus\omega_\#$ and $\omega_\#$, 
respectively, we have for all $\phi\in H^1(\Omega\setminus\omega_\#)$: 
$\phi=0$ on $\partial\Omega$
\begin{equation*}
\int_{\Omega\setminus\omega_\#} \!\!\! (\nabla\phi^0)^{\!\top} A^0 \nabla \phi\,dx
=-\int_{\Omega\setminus\omega_\#} \!\!\! 
\phi\,{\rm div}\bigl((\nabla\phi^0)^{\!\top} A^0\bigr)\,dx
-\int_{\partial\omega_\#^+} (\nabla\phi^0)^{\!\top} A^0 \phi\nu \,dS_x,
\end{equation*}
and for all $\phi\in H^1(\omega_\#)$: 
\begin{equation*}
\int_{\omega_\#} \!\!\! (\nabla\phi^0)^{\!\top} A^0 \nabla \phi\,dx
=-\int_{\omega_\#} \!\!\! 
\phi\,{\rm div} \bigl((\nabla\phi^0)^{\!\top} A^0\bigr)\,dx
+\int_{\partial\omega_\#^-} (\nabla\phi^0)^{\!\top} A^0 \phi\nu \,dS_x.
\end{equation*}
By summing these two expressions and by using the continuity of $\nabla\phi^0$ across the interface $\partial\omega_\#$, 
we insert the strong formulation \eqref{47a} into the above right hand sides and
rewrite problem \eqref{3.25} in the disjoint domain 
$\Omega\setminus\partial\omega_\#$ as follows 
\begin{multline}\label{3.27}
\int_{\Omega\setminus\partial\omega_\#} 
\bigl(  (\nabla\phi^0)^\top A^0\nabla\phi 
-{\textstyle\frac{|\Upsilon\setminus\omega|}{|\Upsilon|}}\sum_{s=0}^n z_s 
e^{-{\textstyle\frac{z_s}{{\kappa}T}} \phi^0} \phi \bigr)\,dx\\
+\int_{\partial\omega_\#}  (\nabla\phi^0)^\top A^0\nu [\![\phi]\!] \,dS_x
=\int_{\Omega\setminus\partial\omega_\#} 
{\textstyle\frac{|\partial\omega|}{|\Upsilon|}} g \phi \,dx\\
\text{for all test-functions $\phi\in H^1(\Omega\setminus\partial\omega_\#)$: 
$\phi=0$ on $\partial\Omega$}.
\end{multline}

In the following, we expand the terms in \eqref{3.27} based on the 
Lemmata~\ref{lem3.1}--\ref{lem3.3}. 
By applying the decomposition \eqref{3.17} of Lemma~\ref{lem3.3} 
to the integrand of the first term in the left 
hand side of \eqref{3.27}, we can represent it as the following sum 
\begin{equation}\label{3.28}
\begin{split}
(\nabla\phi^0)^\top A^0 \nabla\phi =&\ (\nabla\phi^0)^\top 
\bigl( (\eps D N^\eps +I) A^\eps -\eps B^\eps \bigr) \nabla\phi\\
=&\ \Bigl[ 
\Bigl(
\nabla \bigl( \phi^0 +\eps (\nabla\phi^0)^\top N^\eps \bigr)
\Bigr)^{\!\top} 
A^\eps -\eps (N^\eps)^\top D(\nabla\phi^0) A^\eps\\
&\ -(\nabla\phi^0)^\top\eps B^\eps \Bigr] \nabla\phi,
\end{split}
\end{equation}
where we have used 
that  
$\bigl[
\nabla \bigl( (\nabla\phi^0)^\top N^\eps \bigr) \bigr]^\top 
=(\nabla\phi^0)^\top DN^\eps +(N^\eps)^\top D(\nabla\phi^0).$

Next, the integral of the last function on the right hand side of \eqref{3.28} 
can be integrated by parts by using \eqref{3.20} and \eqref{3.21} to calculate
\begin{multline}\label{3.29}
-\int_{\Omega\setminus\partial\omega_\#} 
(\nabla\phi^0)^\top \eps B^\eps \nabla\phi \,dx
%=-\int_{\Omega\setminus\partial\omega_\#} 
%\sum_{i,j,k=1}^d \phi^0_{,i} \eps b_{ijk,k}^\eps \phi_{,j} \,dx\\
=\int_{\Omega\setminus\partial\omega_\#} 
\sum_{i,j,k=1}^d \phi^0_{,ij} \eps b_{ijk,k}^\eps \phi \,dx\\ 
+\int_{\partial\omega_\#}  
\sum_{i,j,k=1}^d \phi^0_{,i} \eps b_{ijk,k}^\eps \nu_j
[\![\phi]\!] \,dS_x
=-\int_{\Omega\setminus\partial\omega_\#} 
\sum_{i,j,k=1}^d \phi^0_{,ij} \eps b_{ijk}^\eps \phi_{,k} \,dx\\ 
+\int_{\partial\omega_\#}  \bigl( 
(\nabla\phi^0)^\top \eps B^\eps \nu 
-\sum_{i,j,k=1}^d \phi^0_{,ij} b_{ijk}^\eps \nu_k \bigr)
[\![\phi]\!] \,dS_x, 
\end{multline}
with $b_{ijk}^\eps(x) :=b_{ijk}(\{\frac{x}{\eps}\})$.
Substituting \eqref{3.28} and \eqref{3.29} in \eqref{3.27},  we rewrite it 
\begin{multline}\label{3.30}
\int_{\Omega\setminus\partial\omega_\#} \!
\Bigl[ 
\Bigl(
\nabla \bigl( \phi^0 +\eps (\nabla\phi^0)^\top N^\eps \bigr)
\Bigr)^{\!\top} 
A^\eps \nabla\phi 
-{\textstyle\frac{|\Upsilon\setminus\omega|}{|\Upsilon|}} \sum_{s=0}^n z_s 
e^{-{\textstyle\frac{z_s}{{\kappa}T}} \phi^0} \phi \Bigr]\,dx\\
+\int_{\partial\omega_\#}  (\nabla\phi^0)^\top 
(A^0 +\eps B^\eps) \nu [\![\phi]\!] \,dS_x
=\int_{\Omega\setminus\partial\omega_\#} 
{\textstyle\frac{|\partial\omega|}{|\Upsilon|}} g \phi \,dx\\
+\eps m_{\Omega\setminus\partial\omega_\#} 
\bigl(  D(\nabla\phi^0),\nabla\phi \bigr)
+m_{\partial\omega_\#} \bigl(  D(\nabla\phi^0),[\![\phi]\!] \bigr),
\end{multline}
where the bilinear continuous forms are given by 
\begin{subequations}\label{3.31}
\begin{multline}\label{3.31a}
m_{\Omega\setminus\partial\omega_\#} 
\bigl(  D(\nabla\phi^0),\nabla\phi \bigr)\\
 :=\int_{\Omega\setminus\partial\omega_\#}  \bigl( 
(N^\eps)^\top D(\nabla\phi^0) A^\eps \nabla\phi
+\sum_{i,j,k=1}^d \phi^0_{,ij} b_{ijk}^\eps \phi_{,k} \bigr) \,dx,
\end{multline}
%\intertext{and}
\begin{equation}\label{3.31b}
m_{\partial\omega_\#} \bigl(  D(\nabla\phi^0),[\![\phi]\!] \bigr) 
:=\int_{\partial\omega_\#}  
\sum_{i,j,k=1}^d \phi^0_{,ij} b_{ijk}^\eps \nu_k [\![\phi]\!] \,dS_x. 
\end{equation}
\end{subequations}

Next, we apply Lemma~\ref{lem3.2} with 
$f(x)=-\sum_{s=0}^n z_s \exp({-{\textstyle\frac{z_s}{{\kappa}T}} \phi^0(x)})$
and obtain the following representation of the nonlinear term 
in \eqref{3.30} 
\begin{multline}
-{\textstyle\frac{|\Upsilon\setminus\omega|}{|\Upsilon|}}
\sum_{s=0}^n\int_{\Omega\setminus
\partial\omega_\#} 
z_s e^{-{\textstyle\frac{z_s}{{\kappa}T}} \phi^0(x)}\phi \,dx\\ 
=-\sum_{s=0}^n \int_{\Omega\setminus
\omega_\#}
z_s e^{-{\textstyle\frac{z_s}{{\kappa}T}} \phi^0(x)}\phi\,dx 
+\eps\, l_2(\phi).
\end{multline}
The boundary integral in \eqref{3.30} 
can be expanded by using \eqref{3.2} in Lemma~\ref{lem3.1}, i.e. 
\begin{equation*}
\int_{\Omega\setminus\partial\omega_\#} 
{\textstyle\frac{|\partial\omega|}{|\Upsilon|}} g\phi\,dx
=\int_{\partial\omega_\#^-} \eps g \phi^- \,dS_x
-\eps\, l_1(\phi),
\end{equation*}

Next, we subtract the equation \eqref{3.30} for $\phi^0$ 
from the perturbed equation \eqref{2.13b} for $\phi^\eps$ 
and use the notation 
$\phi^1 :=\phi^0 +\eps (\nabla\phi^0)^\top N^\eps$. 
Moreover, for $\phi^1$, we remark that $[\![\phi^0]\!]=0$ 
at $\partial\omega_\#$. Hence 
${\textstyle\frac{\alpha}{\eps}} [\![\phi^1]\!]  
=\alpha (\nabla\phi^0)^\top [\![N^\eps]\!] 
=(\nabla\phi^0)^\top (A^0 +\eps B^\eps) \nu$ 
in view of \eqref{3.22}. Thus, after subtracting \eqref{3.30}
from \eqref{2.13b},
we calculate using the above relations
\begin{multline}\label{3.32}
\int_{\Omega\setminus\partial\omega_\#} \!\!\!
\nabla(\phi^\eps -\phi^1)^\top A^\eps\nabla\phi \,dx
+\int_{\partial\omega_\#} {\textstyle\frac{\alpha}{\eps}} 
[\![\phi^\eps -\phi^1]\!] [\![\phi]\!] \,dS_x\\
-\sum_{s=0}^n \int_{{\Omega\setminus\omega_\#}} \!\!\! 
z_s \bigl( e^{-{\textstyle\frac{z_s}{{\kappa}T}} \phi^\eps} 
-e^{-{\textstyle\frac{z_s}{{\kappa}T}} \phi^0} \bigr) \phi \,dx
=\eps (l_1(\phi)+l_2(\phi))\\ 
-\eps\, m_{\Omega\setminus\partial\omega_\#} 
\bigl(  D(\nabla\phi^0),\nabla\phi \bigr)
-m_{\partial\omega_\#} \bigl(  D(\nabla\phi^0),[\![\phi]\!] \bigr).
\end{multline}

One difficulty is that $\phi^1$ cannot be substituted as test-function 
into \eqref{3.32} since $\phi^1\not=0$ at the boundary $\partial\Omega$. 
For its lifting, we take a cut-off function $\eta_\eps$ supported in 
a $\eps$-neighborhood of $\partial\Omega$ such that $\eta_\eps=1$ at 
$\partial\Omega$. 
Hence, $\nabla\eta_\eps\sim{\textstyle\frac{1}{\eps}}$ and 
${\rm supp}(\eta_\eps)\sim\eps$. 
Due to the assumed $\eps$-gap between $\partial\Omega$ and $\omega_\#$, 
we remark that ${\rm supp}(\eta_\eps)$ does not intersect $\partial\omega_\#$. 

After substitution of $\phi =\phi^\eps -\phi^1_{\eta_\eps}$ with 
$\phi^1_{\eta_\eps} :=\phi^0 +\eps (1-\eta_\eps) 
(\nabla\phi^0)^\top N^\eps$ into \eqref{3.32} 
and by using $[\![\phi^1_{\eta_\eps}]\!] =[\![\phi^1]\!]$, 
we obtain the equality 
\begin{align}
\int_{\Omega\setminus\partial\omega_\#} &
\nabla(\phi^\eps -\phi^1)^\top A^\eps\nabla (\phi^\eps -\phi^1) \,dx
+\int_{\partial\omega_\#} {\textstyle\frac{\alpha}{\eps}} 
[\![\phi^\eps -\phi^1]\!]^2 \,dS_x\nonumber\\
&-\sum_{s=0}^n \int_{\Omega\setminus\omega_\#} 
z_s \bigl( e^{-{\textstyle\frac{z_s}{{\kappa}T}} \phi^\eps} 
-e^{-{\textstyle\frac{z_s}{{\kappa}T}} \phi^1_{\eta_\eps}} \bigr) 
(\phi^\eps -\phi^1_{\eta_\eps}) \,dx\nonumber\\ 
=& -m_{\eta_\eps} \bigl( 
\nabla (\phi^\eps -\phi^1),D(\nabla\phi^0)\bigr) 
-m_{\partial\omega_\#} \bigl(  D(\nabla\phi^0), 
[\![\phi^\eps -\phi^1]\!] \bigr)\nonumber\\ 
&+\eps\,  \widetilde{l}(\phi^\eps -\phi^1_{\eta_\eps}) ,\label{3.33}
\end{align}
where we introduce the form $m_{\eta_\eps}$ 
due to the cut-off function as 
\begin{multline}\label{3.35}
m_{\eta_\eps} \bigl( \nabla (\phi^\eps -\phi^1),D(\nabla\phi^0)  \bigr)\\
:=\eps \int_{{\rm supp}(\eta_\eps)} \!\!\! \!\!\!
\nabla (\phi^\eps -\phi^1)^\top A^\eps \nabla \bigl( 
\eta_\eps (\nabla\phi^0)^\top N^\eps \bigr) \,dx, 
\end{multline}
and the short notation $\widetilde{l}$ stands for the following terms
\begin{equation}\label{3.34}
\begin{split}
&\widetilde{l}(\phi) :=l_1(\phi)+l_2(\phi) 
-m_{\Omega\setminus\partial\omega_\#} \bigl( D(\nabla\phi^0),\nabla \phi \bigr) 
+m^\eps (\phi^0,\phi),
\end{split}
\end{equation}
where the nonlinear form $m^\eps$ in \eqref{3.34} 
is given by 
\begin{equation}\label{3.36}
m^\eps (\phi^0,\phi)
:=\sum_{s=0}^n \int_{\Omega\setminus\omega_\#} \!\!\! 
z_s e^{-{\textstyle\frac{z_s}{{\kappa}T}} \phi^0}
{\textstyle\frac{1}{\eps}} 
\bigl( 1 -e^{-\eps (1-\eta_\eps) 
{\textstyle\frac{z_s}{{\kappa}T}} (\nabla\phi^0)^\top N^\eps} \bigr) \phi \,dx.
\end{equation}
From \eqref{3.36}, it can be estimated uniformly as 
\begin{equation}\label{3.37}
\begin{split}
&\bigl| m^\eps (\phi^0,\phi) \bigr|
\le K \|\nabla \phi\|_{L^2(\Omega\setminus\omega_\#)}
\le K \|\nabla\phi\|_{L^2(\Omega\setminus\partial\omega_\#)}, \qquad (K>0),
\end{split}
\end{equation}
due to the Taylor series 
$1 -e^{-\eps \xi} =\eps \xi +o(\eps)$ for small $\eps$. 

The left hand side of \eqref{3.33} can be estimated from below by applying 
the coercivity of the matrix $A$ as assumed in \eqref{2.6} 
and by observing that the third term on the left hand side is nonnegative due to the strict monotonicity of the exponential function.
Altogether with \eqref{2.19}, this implies that
\begin{multline}\label{3.37a}
K_5 \|\phi^\eps -\phi^1\|_{H^1(\Omega\setminus\partial\omega_\#)}^2
\le \bigl| m_{\eta_\eps} \bigl(  D(\nabla\phi^0), 
\nabla (\phi^\eps -\phi^1) \bigr) \bigr|\\
+\bigl| m_{\partial\omega_\#} \bigl(  D(\nabla\phi^0), 
[\![\phi^\eps -\phi^1]\!] \bigr) \bigr|
+\eps |\widetilde{l}(\phi^\eps -\phi^1_{\eta_\eps})|,
\end{multline}
with $K_5 =K_0(\underline{K} +\alpha)>0$ after recalling $\underline{K}$ from \eqref{2.6} and 
$K_0$ from \eqref{2.19}. 

At this point, we remark that the right-hand side of \eqref{3.37a} is a homogeneous 
function of degree one with respect to the norm 
$\|\phi^\eps -\phi^1\|_{H^1(\Omega\setminus\partial\omega_\#)}$ 
as the following estimates will prove.
Thus, the inequality \eqref{3.37a} implies directly that the norm 
$\|\phi^\eps -\phi^1\|_{H^1(\Omega\setminus\partial\omega_\#)}$ is bounded, which  
reconfirms estimate \eqref{2.18}. 

However, the following argument allows to refine the asymptotic residual estimate to obtain 
\eqref{3.26} as $\eps\searrow0^+$. 
In particular, we shall estimate the three terms at the right hand side 
of \eqref{3.37a} and then apply Young's inequality to obtain sums of sufficiently small terms 
of order $O(\|\phi^\eps -\phi^1\|^2_{H^1(\Omega\setminus\partial\omega_\#)})$ 
and constant terms, which will constitute the refined residual estimate.  
 
At first, from the estimates \eqref{3.3}, \eqref{3.8}, \eqref{3.37} and due to 
the boundedness of the bilinear form \eqref{3.31a}
for $\phi\in H^1(\Omega\setminus\partial\omega_\#)$, it follows that
\begin{equation}\label{3.38}
\begin{split}
&|\widetilde{l}(\phi)|
\le K \|\phi\|_{H^1(\Omega\setminus\partial\omega_\#)}, \qquad (K>0). 
\end{split}
\end{equation}
Since $\phi^1_{\eta_\eps} =\phi^1 -\eps \eta_\eps 
(\nabla\phi^0)^\top N^\eps$, we estimate that 
\begin{equation}\label{3.39}
\begin{split}
&\|\phi^\eps -\phi^1_{\eta_\eps} \|_{H^1(\Omega\setminus\partial\omega_\#)}^2
\le 2\|\phi^\eps -\phi^1\|_{H^1(\Omega\setminus\partial\omega_\#)}^2 
+{\rm O}(\eps). 
\end{split}
\end{equation}
Therefore, specifically for $\phi=\phi^\eps -\phi^1_{\eta_\eps}$, 
and by using Young's inequality,
it follows from \eqref{3.38} and \eqref{3.39} that
\begin{equation}\label{3.40}
\begin{split}
&|\widetilde{l}(\phi^\eps -\phi^1_{\eta_\eps})|
\le K_6 \bigl( 
\|\phi^\eps -\phi^1\|_{H^1(\Omega\setminus\partial\omega_\#)}^2 
+1 \bigr),\qquad (K_6>0).
\end{split}
\end{equation}
For $\phi\in H^1(\Omega\setminus\partial\omega_\#)$,  
by using again  
Young's inequality and 
by recalling the properties of the cut-off function $\eta_\eps$ implying 
$\int_{{\rm supp}(\eta_\eps)} |\nabla \eta_\eps|^2\,dx 
={\rm O}({\textstyle\frac{1}{\eps}})$,
we estimate \eqref{3.35} with an arbitrary $t_1\in\mathit{R}_+$ by 
\begin{equation}\label{3.41}
\begin{split}
&\bigl| m_{\eta_\eps} \bigl(  \nabla \phi ,D(\nabla\phi^0) \bigr) \bigr|
\le \eps t_1 K_7 +{\textstyle\frac{1}{t_1}} 
\|\nabla\phi\|_{L^2(\Omega\setminus\partial\omega_\#)}^2,\qquad (K_7>0),
\end{split}
\end{equation}
and the form in \eqref{3.31b} by 
\begin{equation}\label{3.42}
\begin{split}
&\bigl| m_{\partial\omega_\#} \bigl(  D(\nabla\phi^0),[\![\phi]\!] 
\bigr) \bigr| \le \eps t_2 K_8 +{\textstyle\frac{1}{\eps t_2}} 
\|[\![\phi]\!]\|_{L^2(\partial\omega_\#)}^2,\qquad (K_8>0),
\end{split}
\end{equation}
with an arbitrary $t_2\in\mathit{R}_+$. 
Therefore, by applying the estimates \eqref{3.40},  
\eqref{3.41} and \eqref{3.42} with $\phi=\phi^\eps -\phi^1$ to \eqref{3.37a} 
and for suitable $t_1,t_2$, 
and $\eps_0>0$ such that 
$$
0<K:=K_5 -({\textstyle\frac{1}{t_1}}  
+{\textstyle\frac{1}{t_2}} ) K_0 -\eps_0 K_6, 
$$ 
we conclude 
\begin{equation*}
K \|\phi^\eps -\phi^1\|_{H^1(\Omega\setminus\partial\omega_\#)}^2
\le \eps (t_1 K_7 +t_2 K_8 +K_6),
\end{equation*}
for all $\eps<\eps_0$, which yields estimate
\eqref{3.26}. This finishes the proof.
\end{proof}

\section{Discussion}\label{sec4}

In the following, we shall summarise the main observations concerning the presented results.

\begin{itemize}
\item
We remark at first that Theorem~\ref{theo3.1}, in particular, implies by standard arguments  
the weak convergence $\phi^\eps\rightharpoonup\phi^0$ in 
$H^1(\Omega\setminus\partial\omega_\#)$ and the strong convergence 
$\phi^\eps\to\phi^0$ in $L^2(\Omega\setminus\partial\omega_\#)$ 
as $\eps\searrow0^+$, 
as well as the two-scale convergence and the $\Gamma$-convergence of the solutions. 

\item
We observe that the first two terms on the right hand side of 
\eqref{3.37a} 
express the residual error near $\partial\Omega$ and at $\partial\omega_\#$. 
These terms are asymptotically of order $O(\sqrt{\eps})$
%with respect to \eqref{3.33} and \eqref{3.37a}
(as can be see by setting $t_1=O(\eps^{-1/2})=t_2$
in \eqref{3.41} and \eqref{3.42}) and thus constitute the leading order $O(\eps)$ 
in the residual error estimate \eqref{3.26}.

Therefore, by constructing corrector terms in form of the respective boundary layers, 
the $O(\eps)$-estimate \eqref{3.26} could be improved to the order 
$O(\eps^2)$. 

\item
The factor ${\textstyle\frac{1}{\eps}}$ appears at the jump across interface 
$\partial\omega_\#$ in the left hand side of microscopic equation \eqref{2.13b}. 
It is controlled by the coercivity condition \eqref{2.19}. 
We point out that this term disappears in the homogenisation limit and does not contribute 
to the macroscopic equation \eqref{3.25}. 

\item
The factor $\eps$ in front of the inhomogeneous material parameter $g$, which is prescribed 
at the solid phase boundary $\partial\omega_\#^-$, presents the critical order. 
After averaging this factor guarantees the presence of the potential 
${\textstyle\frac{|\partial\omega|}{|\Upsilon|}} g$ 
distributed over the homogeneous domain $\Omega$ in \eqref{3.25}. 

\item 
For variable functions $g(\{\frac{x}{\varepsilon}\})$ distributed periodically 
over the interface $\partial\omega_\#$, the decomposition 
\begin{equation*}
g =\langle g\rangle_y +G,\quad \text{with} \; \langle g\rangle_y 
:=\frac{1}{|\partial\omega|} \int_{\partial\omega} g(y)\,dy, \quad 
\langle G\rangle_y=0,
\end{equation*}
yields in the limit $\varepsilon\searrow0^+$ that the constant value $\langle g\rangle_y$ 
replaces $g$ in the averaged problem \eqref{3.25}, see e.g. \cite{CD/88}. 

\item
The nonlinear term appearing in \eqref{3.25} scales 
with the porousity coefficient 
${\textstyle\frac{|\Upsilon\setminus\omega|}{|\Upsilon|}}$. 

\end{itemize}

\vspace{5mm}\noindent {\bf Acknowledgments}. 
The research results were obtained with the support of 
the Austrian Science Fund (FWF) in the framework of 
the SFB F32 "Mathematical Optimization and Applications in
Biomedical Sciences" and project P26147-N26.
The authors gratefully acknowledge partial support by NAWI Graz.

\end{document}